\documentclass{proc-l}

\usepackage{amsmath,amssymb,amsfonts}
\usepackage{mathtools}
\usepackage{mathrsfs}
\usepackage{enumerate}
\usepackage[hidelinks]{hyperref}

\allowdisplaybreaks[2]

\numberwithin{equation}{section}

\newtheorem{theorem}{Theorem}[section]
\newtheorem{lemma}[theorem]{Lemma}
\newtheorem{proposition}[theorem]{Proposition}
\newtheorem{corollary}[theorem]{Corollary}

\newtheorem*{theoremA}{Theorem A}

\theoremstyle{definition}

\theoremstyle{remark}

\newcommand{\D}{\mathbb D}
\newcommand{\T}{\mathbb T}
\newcommand{\C}{\mathbb C}
\newcommand{\BMOA}{\mathrm{BMOA}}

\newcommand{\dm}{\,dm}
\newcommand{\dA}{\,dA}
\newcommand{\norm}[1]{\left\lVert #1\right\rVert}
\newcommand{\abs}[1]{\left|#1\right|}
\newcommand{\BMOApNorm}[2]{\left\lVert #1\right\rVert_{\BMOA,#2}}
\DeclareMathOperator{\sgn}{sgn}

\title[Isometric Composition Operators on BMOA]
{Isometric Composition Operators on \(\BMOA\) for \(0<p<\infty\)}
 
 \author{ 
Zhaopeng Lin 
}

\subjclass[2020]{Primary 47B33; Secondary 30H35, 30C80}

\keywords{BMOA, composition operator, isometry,
M\"obius-invariant \(H^p\) norm}

\begin{document}
 
\begin{abstract}
We characterize the analytic self-maps of the unit disk inducing
isometric composition operators on \(\BMOA\) with respect to the
M\"obius-invariant \(H^p\) norm for \(p\ge1\) and the corresponding
quasi-norm for \(0<p<1\). Extending the results of Pouliasis for
\(1\le p<2\) and Chen and Wulan for \(1\le p\le4\), we settle the
remaining range \(p>4\) and thus completely resolve Laitila's problem,
while also covering the range \(0<p<1\). Consequently, the isometric
property of \(C_\varphi\) is independent of \(p\in(0,\infty)\).
\end{abstract} 

\maketitle

\section{Introduction}\label{sec:introduction}

Let
\[
    \D=\{z\in\C:|z|<1\},
    \qquad
    \T=\partial\D,
\]
and let \(H(\D)\) denote the space of analytic functions on \(\D\).
If \(\varphi:\D\to\D\) is an analytic self-map, then the composition
operator induced by \(\varphi\) is defined by
\[
    C_\varphi f=f\circ\varphi,
    \qquad f\in H(\D).
\]

Composition operators constitute a class of operators whose
operator-theoretic properties are closely connected with the
function-theoretic behavior of their symbols. Isometric composition operators have been studied on several classical
spaces of analytic functions, including Hardy, Dirichlet, Bloch,
Besov-type, and weighted spaces; see, for example,
\cite{MartinVukoticDirichlet,ColonnaBloch,MartinVukoticBloch,
AllenHellerPons,ShabazzTjani} and the references therein.
Their characterization depends essentially on both the underlying
space and the norm under consideration.

We now turn to the space \(\BMOA\). Let \(dm\) denote normalized
Lebesgue measure on \(\T\), and let \(dA\) denote unnormalized planar area measure on \(\D\). For \(0<p<\infty\), the
Hardy space \(H^p\) consists of the analytic functions \(f\) on
\(\D\) for which
\[
    \norm{f}_{H^p}
    :=
    \sup_{0<r<1}
    \left(
        \int_{\T}|f(r\zeta)|^p\dm(\zeta)
    \right)^{1/p}
    <\infty.
\]

For \(a\in\D\), let
\[
    \sigma_a(z)
    =
    \frac{a-z}{1-\overline a z},
    \qquad z\in\D,
\]
be the involutive automorphism of \(\D\) interchanging \(0\) and
\(a\). For \(f\in H(\D)\), \(a\in\D\), and \(0<p<\infty\), set
\begin{equation}\label{eq:A-definition}
    A_p(f,a)
    :=
    \norm{f\circ\sigma_a-f(a)}_{H^p}^{p}
    =
    \int_{\T}
    |f(\sigma_a(\zeta))-f(a)|^p\dm(\zeta),
\end{equation}
and
\begin{equation}\label{eq:seminorm}
    B_p(f)
    :=
    \sup_{a\in\D}A_p(f,a)^{1/p}.
\end{equation}
An analytic function \(f\) belongs to \(\BMOA\) if and only if
\(B_p(f)<\infty\) for one, equivalently for every, \(p>0\). For
\(p\ge1\),
\begin{equation}\label{eq:BMOA-p-norm}
    \BMOApNorm{f}{p}
    :=
    |f(0)|+B_p(f)
\end{equation}
defines an equivalent norm on \(\BMOA\), whereas for \(0<p<1\) the
same formula defines an equivalent quasi-norm. Throughout the paper, isometry always refers to preservation of the norm \(\BMOApNorm{\cdot}{p}\) when \(p\ge1\), and of the corresponding quasi-norm when \(0<p<1\).

It is useful to recall that the isometric composition operators on the
Hardy space \(H^2\) were characterized by Ryff and Nordgren
\cite{Ryff,Nordgren}:
\[
C_\varphi \text{ is an isometry on } H^2
\quad\Longleftrightarrow\quad
\varphi \text{ is inner and } \varphi(0)=0.
\]
In contrast, the situation on \(\BMOA\) is less rigid. Shapiro
\cite{ShapiroBMOA}, see also Kobayashi \cite{Kobayashi1989}, showed
that every inner self-map \(\varphi\) with \(\varphi(0)=0\) induces an
isometric composition operator on \(\BMOA\), whereas Kobayashi
\cite{Kobayashi1990} constructed non-inner symbols inducing
isometries on \(\BMOA\). Thus, unlike the Hardy-space case, innerness
does not characterize isometric composition operators on \(\BMOA\).

A systematic characterization for the M\"obius-invariant \(H^2\) norm
was obtained by Laitila \cite{Laitila}. For \(a\in\D\), set $
\varphi_a
=
\sigma_{\varphi(a)}\circ\varphi\circ\sigma_a$. 
For \(\varphi(0)=0\), Laitila characterized isometric composition
operators in terms of sequences \((a_n)\subset\D\) satisfying
\[
\varphi(a_n)\to w,
\qquad
\|\varphi_{a_n}\|_{H^2}\to1
\]
for every \(w\in\D\), and obtained an equivalent characterization using
the test functions \(\sigma_w\). He further asked
\cite[Remark~4.4]{Laitila} for an analogous characterization with
respect to the M\"obius-invariant \(H^p\) norms for
\(1\le p<\infty\), \(p\ne2\).

Pouliasis \cite{Pouliasis} subsequently treated the range
\(1\le p<2\). For \(\varphi(0)=0\), he characterized isometric
composition operators by the condition that, for every \(w\in\D\),
there exists a sequence \((a_n)\subset\D\) such that
\[
\varphi(a_n)\to w,
\qquad
H_\varphi(z,a_n)\to0
\quad\text{for almost every }z\in\D,
\]
where \(H_\varphi\) denotes the Lindel\"of--Nevanlinna defect.
In particular, the class of isometric symbols in this range
coincides with that for \(p=2\).

To formulate Pouliasis' condition more explicitly, for \(z,b\in\D\)
let
\begin{equation}\label{eq:green-function}
G(z,b)
:=
\log\left|
\frac{1-\overline b z}{z-b}
\right|
\end{equation}
be the Green function of \(\D\). For an analytic self-map
\(\varphi:\D\to\D\), define the Nevanlinna counting function with
base point \(a\) by
\begin{equation}\label{eq:counting}
N_\varphi(z,a)
:=
\sum_{\varphi(u)=z}G(u,a)
=
\sum_{\varphi(u)=z}
\log\left|
\frac{1-\overline a u}{u-a}
\right|,
\end{equation}
where the preimages are counted with multiplicity. The corresponding
Lindel\"of--Nevanlinna defect is
\begin{equation}\label{eq:defect-definition}
H_\varphi(z,a)
:=
G(z,\varphi(a))-N_\varphi(z,a)
\ge0,
\end{equation}
where the inequality follows from the Lindel\"of principle.

More recently, Chen and Wulan \cite{ChenWulan} extended the
characterization to the range $
1\le p\le4$. 
In particular, they showed that for \(1\le p\le4\) the
disk automorphisms \(\sigma_w\) form an appropriate test family.
Their theorem may be stated as follows.

\begin{theoremA}\cite[Theorem~1.1]{ChenWulan}
Let \(1\le p\le4\), and let \(\varphi:\D\to\D\) be an analytic
self-map satisfying \(\varphi(0)=0\). Then the following conditions
are equivalent.
\begin{enumerate}[(i)]
    \item The composition operator \(C_\varphi\) is an isometry with
    respect to \(\BMOApNorm{\cdot}{p}\); that is,
    \[
        \BMOApNorm{f\circ\varphi}{p}
        =
        \BMOApNorm{f}{p}
        \qquad(f\in\BMOA).
    \]

    \item For every \(w\in\D\),
    \[
        B_p(\sigma_w\circ\varphi)=1.
    \]

    \item For every \(w\in\D\), there exists a sequence
    \(\{a_n\}_{n\ge 1}\subset\D\) such that
    \[
        \varphi(a_n)\longrightarrow w,
        \qquad
        \norm{\varphi_{a_n}}_{H^p}\longrightarrow1.
    \]

    \item For every \(w\in\D\), there exists a sequence
    \(\{a_n\}_{n\ge 1}\subset\D\) such that
    \[
        \varphi(a_n)\longrightarrow w,
        \qquad
        H_\varphi(z,a_n)\longrightarrow0
    \]
    for almost every \(z\in\D\).
\end{enumerate}
\end{theoremA}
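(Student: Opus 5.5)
We will establish the cycle (i)\(\Rightarrow\)(ii)\(\Rightarrow\)(iii)\(\Rightarrow\)(iv)\(\Rightarrow\)(i). The first implication is immediate. Since \(\sigma_w\) is an automorphism, \(\sigma_w\circ\sigma_a\) is a unimodular rotation of some \(\sigma_b\), so \(A_p(\sigma_w,a)=\norm{\sigma_b-b}_{H^p}^p\), with \(b=\sigma_w(a)\). Hence \(B_p(\sigma_w)=\sup_b\norm{\sigma_b-b}_{H^p}\), and this supremum equals \(1\) (the limit as \(|b|\to1\)). We will verify that the map \(b\mapsto\norm{\sigma_b-b}_{H^p}\) is at most \(1\) precisely in the range \(p\le 4\): this quantity is \(\bigl((1-|b|^2)^p\int_\T|\zeta|^p|1-\overline b\zeta|^{-p}\dm\bigr)^{1/p}\), a hypergeometric-type integral. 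Since \(\varphi(0)=0\), the constant term satisfies \(|\sigma_w(\varphi(0))|=|w|=|\sigma_w(0)|\). Thus (i) gives \(B_p(\sigma_w\circ\varphi)=B_p(\sigma_w)=1\).

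For (ii)\(\Rightarrow\)(iii), observe the identity \(\sigma_w\circ\varphi\circ\sigma_a-\sigma_w(\varphi(a))=\sigma_{w}\circ\sigma_{\varphi(a)}\circ\varphi_a-\sigma_w(\varphi(a))\). Here \(\sigma_w\circ\sigma_{\varphi(a)}\) is a rotated \(\sigma_c\) with \(c=\sigma_{\varphi(a)}(w)\), so \(A_p(\sigma_w\circ\varphi,a)=\norm{\sigma_c\circ\varphi_a-c}_{H^p}^p\), where \(\varphi_a(0)=0\). Choose \(a_n\) with \(A_p(\sigma_w\circ\varphi,a_n)\to1\). We aim to show the following subordination-type estimate: for \(\psi\) with \(\psi(0)=0\), the quantity \(\norm{\sigma_c\circ\psi-c}_{H^p}^p\) is at most \(\int_\T|\sigma_c(\zeta)-c|^p\) evaluated for the "most spread" \(\psi\), with the inequality being strictly less than \(1\) unless both \(|c|\) is small and \(\norm{\psi}_{H^p}\approx1\). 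A compactness argument in \(c\) together with Littlewood subordination (via the subharmonicity of \(|\sigma_c-c|^p\circ\)) then forces \(c_n\to0\), i.e. \(\varphi(a_n)\to w\), and \(\norm{\varphi_{a_n}}_{H^p}\to1\). This is the main obstacle. For \(c\) near the boundary, the value \(\norm{\sigma_c-c}_{H^p}\) must stay strictly below \(1\) along with its subordinated versions, and the exponent restriction \(p\le4\) should enter exactly here, through a monotonicity or convexity property of \(r\mapsto\int_\T|\sigma_c(r\zeta)-c|^p\dm\). I would first attempt to write \(|\sigma_c-c|^p=(1-|c|^2)^p|z|^p|1-\overline cz|^{-p}\) and expand in a power series of \(\overline c z\), comparing coefficients.

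For (iii)\(\Rightarrow\)(iv), we use the standard relation between \(\norm{\psi}_{H^p}\to1\) for \(\psi=\varphi_{a_n}\), \(\psi(0)=0\), and the fact that \(\psi\) is asymptotically inner. By the Littlewood–Paley/Stanton formula, \(1-\norm{\psi}_{H^2}^2\) is comparable to an integral of the defect. Moreover \(\norm{\psi}_{H^p}\to1\) for any \(p\) forces \(|\psi^*|\to1\) in measure, hence \(\norm{\psi}_{H^2}\to1\). Conformal invariance gives \(H_{\varphi_{a}}(z,0)=H_\varphi(\sigma_{\varphi(a)}(z),a)\). Passing to a subsequence yields a.e. convergence, and since \(\varphi(a_n)\to w\) the change of variables is harmless.

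Finally, (iv)\(\Rightarrow\)(i). We will show \(B_p(f\circ\varphi)\le B_p(f)\) via Littlewood subordination applied to \(f\circ\sigma_{\varphi(a)}\circ\varphi_a\). The reverse inequality will come from approximating \(A_p(f,w)\) by \(A_p(f\circ\varphi,a_n)\), using that the vanishing defect forces \(\varphi_{a_n}\) to push \(m\) forward nearly to \(m\) on \(\T\) (Pouliasis' argument, valid for all \(p\)). The equality \(f(\varphi(0))=f(0)\) handles the constant term.
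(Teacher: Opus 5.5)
\textbf{Verdict: genuine gap.} The step where $p\le4$ enters, which you yourself call the main obstacle, is misstated and unproved, and (iv)$\Rightarrow$(i) is only gestured at.

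\textbf{What works.} Your cycle is a reasonable alternative to the paper's scheme. The paper proves (ii)$\Rightarrow$(iv) through the defect identity. Your (ii)$\Rightarrow$(iii) via Littlewood subordination, $A_p(\sigma_w\circ\varphi,a)\le\mathcal A_p(\abs{\sigma_w(\varphi(a))})$, would be shorter. Your (iii)$\Rightarrow$(iv) via Stanton's formula and $H_{\varphi_a}(z,0)=H_\varphi(\sigma_{\varphi(a)}(z),a)$ is fine.

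\textbf{The gap.} You assert that $\sup_b\norm{\sigma_b-b}_{H^p}=1$ is ``the limit as $\abs b\to1$''. This is false. In fact $\norm{\sigma_b-b}_{H^p}^p=\mathcal A_p(\abs b)=(1-\abs b^2)^p\int_\T\abs{1-\overline b\zeta}^{-p}\dm\to0$ as $\abs b\to1$, and the value $1$ is taken at $b=0$, where $\sigma_0(z)=-z$. This is not cosmetic. With the profile you describe, a maximizing sequence in (ii)$\Rightarrow$(iii) would escape to $\T$ rather than satisfy $c_n\to0$.

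For the same reason, $p\le4$ does not enter ``for $c$ near the boundary''. Nor does it enter through monotonicity of $r\mapsto\int_\T\abs{\sigma_c(r\zeta)-c}^p\dm$, since these means increase for every $p$ by subharmonicity. It enters near $c=0$. For $p>4$, with $\beta=(p-2)/2$, one has $Y_p'(0)=\beta^2-1>0$, so $\mathcal A_p>1$ for small $r>0$ and $B_p(\sigma_w)>1$.

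\textbf{The missing lemma and how to close (i)$\Rightarrow$(ii)$\Rightarrow$(iii).} You need $\mathcal A_p(r)<1$ for $0<r<1$ when $p\le4$, and this needs no coefficient comparison. $\mathcal A_p(r)^{1/p}$ is the $L^p(m)$-norm of $\abs{\sigma_r-r}$ on $\T$, hence nondecreasing in $p$. Also $\mathcal A_4(r)=(1-r^2)^4\sum_{n\ge0}(n+1)^2r^{2n}=1-r^4$. Hence $\mathcal A_p(r)\le(1-r^4)^{p/4}$.
\begin{itemize}
\item This completes (i)$\Rightarrow$(ii).
\item Subordination then forces $\abs{c_n}\to0$.
\item The bound $\sup_{\overline\D}\abs{\sigma_c(z)-c+z}\le 2\abs c/(1-\abs c)$ turns $\norm{\sigma_{c_n}\circ\psi_n-c_n}_{H^p}\to1$ into $\norm{\varphi_{a_n}}_{H^p}\to1$.
\end{itemize}

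\textbf{(iv)$\Rightarrow$(i) is only gestured at.} Almost-everywhere convergence $H_\varphi(\cdot,a_n)\to0$ does not by itself control any integral, and $f\circ\sigma_{b_n}-f(b_n)$ moves with $n$. The paper handles this with the defect identity, the bound $0\le H_\varphi(z,a_n)\le G(z,b_n)$, Scheff\'e's lemma and Vitali's theorem.

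Your push-forward idea can be completed as follows, but none of it is in the proposal:
\begin{enumerate}
\item Stanton's formula plus generalized dominated convergence, with the same domination, gives $\norm{\varphi_{a_n}}_{H^2}\to1$.
\item Then $(\varphi_{a_n}^*)_\#m\to m$ weak-$*$, since any limit is a probability measure on $\T$ representing $0$ for harmonic functions.
\item Polynomial approximation in $H^p$ and subordination then absorb the $n$-dependence.
\end{enumerate}
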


The restriction \(p\le4\) is sharp for the disk-automorphism
testing condition in Theorem~A: Chen and Wulan showed that this
condition fails for \(p>4\). Thus Laitila's problem remained open
in the range \(p>4\), while the quasi-Banach range \(0<p<1\) was
not covered by the preceding results.

The purpose of the present paper is to complete the characterization
for the full range \(0<p<\infty\).  To explain the obstruction, for \(p>0\) define
\begin{equation}\label{eq:radial-function}
    \mathcal A_p(r)
    :=
    A_p(\sigma_0,r)
    =
    (1-r^2)^p
    \int_{\T}|1-r\zeta|^{-p}\dm(\zeta),
    \qquad 0\le r<1.
\end{equation}
Since \(\sigma_0(z)=-z\), multiplication by a unimodular constant
shows that $
    A_p(\sigma_0,r)=A_p(z,r)$.  

The obstruction for \(p>4\) is already visible from
\begin{equation}\label{eq:Mobius-function-intro}
A_p(\sigma_w,a)
=
\mathcal A_p\bigl(|\sigma_a(w)|\bigr).
\end{equation}
For \(0<p\le4\), \(\mathcal A_p\) has its unique maximum at \(0\), so
\(a=w\) is the unique maximizing center. For \(p>4\), however,
\(\mathcal A_p\) has a unique maximizing radius \(r_p\in(0,1)\);
hence the maximizing centers form the pseudohyperbolic circle $
|\sigma_a(w)|=r_p$. 
Thus \(\sigma_w\) no longer singles out \(w\). Our first result
constructs a replacement test function with a prescribed unique
maximizing center.

\begin{theorem}\label{thm:peak-family}
Let \(0<p<\infty\). For every \(w\in\D\), there exists a function
\(\mathcal P_{p,w}\), analytic on a neighborhood of
\(\overline\D\), such that
\[
    A_p(\mathcal P_{p,w},w)=1
    >
    A_p(\mathcal P_{p,w},a),
    \qquad a\in\D\setminus\{w\}.
\]
In particular,
\[
    B_p(\mathcal P_{p,w})=1.
\]
More precisely, the functions \(\mathcal P_{p,w}\) may be chosen as
follows.
\begin{enumerate}[(i)]
    \item If \(0<p\le4\), then one may take $
        \mathcal P_{p,w}=\sigma_w$. 

    \item If \(p>4\), then there exist numbers
    \[
        \tau_p>0,
        \qquad
        \rho_p\in(0,1),
    \]
    such that, for
    \[
        F_p(z)=z+\tau_pz^3+\tau_pz^4,
    \]
    the function $
        a\longmapsto A_p(F_p,a)$ 
    has a unique nondegenerate global maximum at \(a=\rho_p\).
    If
    \[
        \Lambda_p
        :=
        A_p(F_p,\rho_p)^{1/p}
        =
        B_p(F_p),
    \]
    then one may take
\begin{equation}\label{eq:high-p-peak}
\mathcal P_{p,w}(z)
=
\Lambda_p^{-1}
F_p\!\left(\sigma_{\rho_p}(\sigma_w(z))\right).
\end{equation}
\end{enumerate}
\end{theorem}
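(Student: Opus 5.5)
First I reduce everything to the radial profile \(\mathcal A_p\) and to a fixed-center problem, using Möbius invariance. For any \(f\) and automorphism \(\sigma_b\), we have \((f\circ\sigma_b)\circ\sigma_a=f\circ\sigma_{\sigma_b(a)}\circ\gamma\) with \(\gamma\) a rotation, and rotations preserve \(dm\). Hence
\[
A_p(f\circ\sigma_b,a)=A_p(f,\sigma_b(a)).
\]
Since \(\sigma_{\rho_p}\circ\sigma_w\) is a bijection of \(\D\) sending \(w\) to \(\rho_p\), the statement for \(\mathcal P_{p,w}\) in (ii) follows from the corresponding property of \(F_p\) at \(\rho_p\) after dividing by \(\Lambda_p^p\). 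So the whole theorem reduces to two facts. For (i), \(\mathcal A_p\) must have its unique maximum at \(0\) with \(\mathcal A_p(0)=1\); this suffices because \(A_p(\sigma_w,a)=\mathcal A_p(|\sigma_a(w)|)\). For (ii), \(a\mapsto A_p(F_p,a)\) must have a unique global maximum at a real point \(\rho_p\).

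For (i), I expand \(|1-r\zeta|^{-p}=|(1-r\zeta)^{-p/2}|^2\) and apply Parseval. This gives
\[
\mathcal A_p(r)=(1-r^2)^p\sum_{n\ge0}c_n^2r^{2n},\qquad c_n=\binom{n+p/2-1}{n}.
\]
Setting \(t=r^2\), I need \(g(t)=(1-t)^p\sum c_n^2t^n<1\) for \(0<t<1\). The series is the hypergeometric function \({}_2F_1(p/2,p/2;1;t)\), and Euler's transformation gives \(g(t)={}_2F_1(1-p/2,1-p/2;1;t)\). For \(0<p\le 2\) all coefficients beyond the first are nonnegative, which points the wrong way, so this needs care. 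I expect the cleaner route is to differentiate \(\log g\) and reduce \(g'<0\) to a coefficient comparison of the type \(c_{n+1}^2(n+1)\) against \(p\sum\) (partial sums) \(\cdots\). Alternatively, I could cite Chen–Wulan for \(1\le p\le4\) and redo their monotonicity argument for \(0<p<1\); that argument should go through unchanged, since it only uses the sign of \(p(p-4)\) in the second-order expansion at \(0\).

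For (ii), the structure is perturbative. First I analyze \(F=z\): \(A_p(z,a)=\mathcal A_p(|a|)\) is maximized on the whole circle \(|a|=r_p\). I will show that \(r_p\) is a nondegenerate radial maximum, i.e.\ \(\mathcal A_p''(r_p)<0\), and that \(\mathcal A_p(r)<\mathcal A_p(r_p)-\delta\) away from a neighborhood of \(r_p\) and near \(r=1\), where \(\mathcal A_p(r)\to\)? (to be checked: for \(p>1\), \(\mathcal A_p(r)\sim C(1-r)\to0\)). Next I perturb to \(F_\tau=z+\tau z^3+\tau z^4\) with small \(\tau\). I write
\[
A_p(F_\tau,re^{i\theta})=\mathcal A_p(r)+\tau\,\Phi(r,\theta)+O(\tau^2),
\]
uniformly on compacts, where \(\Phi\) is the first variation
\[
\Phi=p\,\mathrm{Re}\int|u|^{p-2}\bar u\,v\,dm,
\]
with \(u=\sigma_a(\zeta)-a\) and \(v\) the corresponding variation \(\sigma_a^3+\sigma_a^4-a^3-a^4\). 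The \(z^3\) and \(z^4\) terms produce angular dependences \(\cos 2\theta\)-type and \(\cos3\theta\)-type; this is the reason for using two monomials. Their combination at \(r=r_p\) should have a unique strict maximum at \(\theta=0\). The choice of equal coefficients is meant to make \(\theta=0\) the unique maximizer of \(\alpha\cos2\theta+\beta\cos3\theta\), so I must verify that \(\alpha,\beta>0\) there. Once this holds, a standard implicit-function or Morse-lemma argument gives, for small \(\tau\), a unique nondegenerate maximum near \((r_p,0)\). Away from that neighborhood, the uniform estimate together with control near the boundary (the perturbation is bounded, and \(A_p(F_\tau,a)\) stays small as \(|a|\to1\)) keeps the values strictly below the maximum. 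By the symmetry \(\overline{F(\bar z)}=F(z)\), the maximizer is real; this defines \(\rho_p\).

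The main obstacle is the sign computation of the angular coefficients \(\alpha,\beta\) at \(r=r_p\). These are explicit hypergeometric-type integrals, and showing that they are positive for every \(p>4\) is where I expect real work. If positivity fails, my fallback is to allow \(\tau\) of either sign, with separate coefficients for the cubic and quartic terms. The second delicate point is uniformity near \(|a|\to1\), which I plan to handle by proving \(\sup_{|a|>R}A_p(F_\tau,a)\to\) the boundary value, which is less than the maximum, uniformly in small \(\tau\).
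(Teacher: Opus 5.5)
\textbf{Part (i).} Your argument here contains an error and does not reach a proof. Euler's transformation gives
\((1-t)^p\,{}_2F_1(p/2,p/2;1;t)=(1-t)\,{}_2F_1(1-p/2,1-p/2;1;t)\), and you dropped the factor \(1-t\). As you wrote it, \(g\) has Taylor coefficients \(\bigl((1-p/2)_n/n!\bigr)^2\ge0\) for every \(p\). That forces \(g\ge1\), which is impossible because \(\mathcal A_p(r)\to0\) as \(r\to1\). This is the ``wrong way'' you noticed.

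Your fallback does not close the gap either. The sign of the \(r^2\)-coefficient \(-p(4-p)/4\) only shows that \(r=0\) is a local maximum, and it is inconclusive at \(p=4\).

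With the factor restored, your Parseval route does work. Write \(g(t)=1-\sum_{n\ge1}(d_{n-1}-d_n)t^n\) with \(d_n=\bigl((1-p/2)_n/n!\bigr)^2\). Then \(d_n/d_{n-1}=(1-p/(2n))^2\le1\) if and only if \(p\le4n\). So for \(0<p\le4\) every coefficient of \(g-1\) is \(\le0\), and not all of them vanish. Hence \(g<1\) on \((0,1)\). The paper uses the shorter comparison \(A_p^{1/p}\le A_4^{1/4}=(1-r^4)^{1/4}\), from monotonicity of \(L^q\)-means on a probability space. That comparison covers \(0<p<1\) with no extra work.

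\textbf{Part (ii).} Your outline matches the paper's architecture: perturb by \(z^3+z^4\), use the angular factors \(\cos2\theta,\cos3\theta\), reduce radially, use conjugation symmetry, and control the boundary. However, the two facts that carry the proof are only asserted.

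(a) You need that for \(p>4\), \(\mathcal A_p\) has a unique critical point \(r_p\) and \(\mathcal A_p''(r_p)<0\). This is not visible from your series. It is exactly what the reduction to the angular function \(\widetilde{\mathcal J}(t,r(t,\theta),\theta)=\mathcal A_p(r_p)+tg_p(\theta)+O(t^2)\) requires, because at \(t=0\) the whole circle \(\abs a=r_p\) is critical. The paper gets this from the ODE for \(Y_p(x)=(1-x)\,{}_2F_1(-\beta,-\beta;1;x)\), \(\beta=(p-2)/2\). At a critical point this ODE reads \(x(1-x)^2Y_p''=(\beta+1)^2(x_0-x)Y_p\) with \(x_0=(p-4)/p\). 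That forces every critical point to lie beyond \(x_0\) and to be a strict local maximum, hence unique.

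(b) You leave open the positivity of \(C_{3,p}(r_p)\) and \(C_{4,p}(r_p)\). Without it you have not proved the theorem as stated, with equal positive coefficients and \(\rho_p\in(0,1)\). Your sign-flexible fallback changes \(F_p\), and it still needs \(C_{3,p}(r_p)C_{4,p}(r_p)\ne0\).

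The missing idea for (b) is to exploit the criticality of \(r_p\) through the M\"obius flow \(\tau_s(z)=(z+s)/(1+sz)=z+s(1-z^2)+O(s^2)\).
\begin{itemize}
\item This flow gives \(pC_{2,p}(r)=-(1-r^2)\mathcal A_p'(r)\), hence \(C_{2,p}(r_p)=0\). In terms of the moments \(M_k(r)=\int_\T\abs{u-r}^{p-2}u^k\dm(u)\), this says \(M_1=-r_pM_0\).
\item Combined with \(C_{m,p}=(1-r^2)\sum_{j<m}r^jM_{m-1-j}\), it reduces \(C_{3,p}(r_p)\) to \((1-r_p^2)M_2(r_p)\). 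This is positive by the hypergeometric formula for \(M_2\), since \((-\beta)_2>0\).
\item It likewise reduces \(C_{4,p}(r_p)\) to \((1-r_p^2)\bigl(M_3+r_pM_2\bigr)(r_p)\).
\end{itemize}
Positivity of that last quantity still needs a case analysis in \(\beta\):
\begin{itemize}
\item the sign of \((-\beta)_3\) for \(\beta\le2\);
\item a Taylor-coefficient comparison for \(2<\beta<3\);
\item the three-term moment recurrence together with a convex quadratic for \(\beta\ge3\).
\end{itemize}
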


The resulting test functions give the following complete
characterization. For each \(p>0\) and \(w\in\D\), fix a function
\(\mathcal P_{p,w}\) as in Theorem~\ref{thm:peak-family}.

\begin{theorem}\label{thm:main}
Let \(0<p<\infty\), and let \(\varphi:\D\to\D\) be an analytic
self-map satisfying \(\varphi(0)=0\). Then the following conditions
are equivalent.
\begin{enumerate}[(i)]
    \item The composition operator \(C_\varphi\) is an isometry with
    respect to \(\BMOApNorm{\cdot}{p}\); that is,
    \[
        \BMOApNorm{f\circ\varphi}{p}
        =
        \BMOApNorm{f}{p}
        \qquad(f\in\BMOA).
    \]

\item For every \(w\in\D\),
\[
    B_p\bigl(\mathcal P_{p,w}\circ\varphi\bigr)=1.
\]

    \item For every \(w\in\D\), there exists a sequence
    \(\{a_n\}_{n\ge 1}\subset\D\) such that
    \[
        \varphi(a_n)\longrightarrow w,
        \qquad
        \norm{\varphi_{a_n}}_{H^p}\longrightarrow1.
    \]

    \item For every \(w\in\D\), there exists a sequence
    \(\{a_n\}_{n\ge 1}\subset\D\) such that
    \[
        \varphi(a_n)\longrightarrow w,
        \qquad
        H_\varphi(z,a_n)\longrightarrow0
    \]
    for almost every \(z\in\D\).
\end{enumerate}
\end{theorem}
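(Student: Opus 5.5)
We prove the cycle (i)\(\Rightarrow\)(ii)\(\Rightarrow\)(iii)\(\Rightarrow\)(iv)\(\Rightarrow\)(i). The two tools are the Möbius identity
\[
A_p(f\circ\varphi,a)=\int_\T|f(\sigma_{\varphi(a)}(\varphi_a(\zeta)))-f(\varphi(a))|^p\dm(\zeta)
\]
(from \(\varphi\circ\sigma_a=\sigma_{\varphi(a)}\circ\varphi_a\), where \(\varphi_a(0)=0\)) and subordination. Writing \(g_b=f\circ\sigma_b-f(b)\), which satisfies \(g_b(0)=0\), Littlewood's subordination principle gives
\[
A_p(f\circ\varphi,a)=\norm{g_{\varphi(a)}\circ\varphi_a}_{H^p}^p\le A_p(f,\varphi(a)).
\]
Hence \(B_p(f\circ\varphi)\le B_p(f)\) for all \(f\), and \(|f(\varphi(0))|=|f(0)|\). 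So (i) is equivalent to \(B_p(f\circ\varphi)=B_p(f)\) for all \(f\in\BMOA\).

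\textbf{(i)\(\Rightarrow\)(ii).} This is immediate from Theorem~\ref{thm:peak-family}, since \(B_p(\mathcal P_{p,w})=1\).

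\textbf{(ii)\(\Rightarrow\)(iii).} This is the step where the peak family is used.
\begin{enumerate}[(1)]
\item Choose \(a_n\) with \(A_p(\mathcal P_{p,w}\circ\varphi,a_n)\to1\). By subordination,
\[
1\leftarrow A_p(\mathcal P_{p,w}\circ\varphi,a_n)\le A_p(\mathcal P_{p,w},\varphi(a_n))\le1,
\]
so \(A_p(\mathcal P_{p,w},\varphi(a_n))\to1\).
\item Pass to a subsequence with \(\varphi(a_n)\to b\in\overline\D\). Continuity of \(a\mapsto A_p(\mathcal P_{p,w},a)\) on \(\D\), together with the strict uniqueness of the maximum at \(w\), forces \(b=w\), provided we rule out \(b\in\T\).
\item To rule out \(b\in\T\), show \(\limsup_{|a|\to1}A_p(\mathcal P_{p,w},a)<1\). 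Since \(\mathcal P_{p,w}\) is analytic on a neighborhood of \(\overline\D\), the functions \(\mathcal P_{p,w}\circ\sigma_a-\mathcal P_{p,w}(a)\) tend to \(0\) boundedly a.e. as \(|a|\to1\) (harmonic measure concentrates). So \(A_p\to0\) at the boundary.
\item Next we need \(\norm{\varphi_{a_n}}_{H^p}\to1\). Set \(h=\mathcal P_{p,w}\circ\sigma_w-\mathcal P_{p,w}(w)\) and \(\psi_n=\sigma_w\circ\sigma_{\varphi(a_n)}\circ\varphi_{a_n}\), which is nearly \(\varphi_{a_n}\) up to rotation, since \(\varphi(a_n)\to w\). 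Then
\[
\norm{h\circ\psi_n}_{H^p}^p\to1=\norm{h}_{H^p}^p .
\]
By normal families, \(\varphi_{a_n}\to\chi\) locally uniformly with \(\chi(0)=0\).
\item Show that equality in subordination forces \(\chi\) inner, and hence \(\norm{\varphi_{a_n}}_{H^p}\to1\). Here we use lower semicontinuity together with Lemma-type facts: for nonconstant \(h\) with \(h(0)=0\), \(\norm{h\circ\chi}_{H^p}=\norm{h}_{H^p}\) iff \(\chi\) is inner, via the subharmonic majorant \(|h|^p\), strictly subharmonic where \(h'\ne0\). Then convergence of the least harmonic majorants gives the statement about \(\varphi_{a_n}\).
\end{enumerate}
This last step is the main obstacle. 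We must pass from the limit \(\chi\) back to the norms of \(\varphi_{a_n}\) themselves. I would route it through (iv): the quantity \(\norm{h}^p_{H^p}-\norm{h\circ\psi_n}_{H^p}^p\) equals an integral of \(\Delta|h|^p\) against the defect \(H_\varphi(\cdot,a_n)\), transported by \(\sigma\)'s, via the Stanton/Littlewood–Paley formula
\[
\norm{g\circ\psi}_p^p-|g(0)|^p=\tfrac{1}{2\pi}\int\Delta|g|^p\,N_\psi .
\]
Since \(\Delta|h|^p>0\) a.e., Fatou's lemma and Harnack (the defect is a positive superharmonic family) give \(H_\varphi(z,a_n)\to0\) a.e. along a subsequence. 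In practice, then, I would prove (ii)\(\Rightarrow\)(iv) directly this way.

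\textbf{(iv)\(\Leftrightarrow\)(iii) and (iv)\(\Rightarrow\)(i).} The equivalence (iv)\(\Leftrightarrow\)(iii) uses the same formula with \(g(z)=z\), where \(\Delta|z|^p=p^2|z|^{p-2}\). For (iv)\(\Rightarrow\)(i): given \(f\) and \(\varepsilon>0\), pick \(w\) with \(A_p(f,w)>B_p(f)^p-\varepsilon\) and take \(a_n\) from (iv). Then
\[
A_p(f,\varphi(a_n))-A_p(f\circ\varphi,a_n)=\tfrac1{2\pi}\int\Delta|g_{\varphi(a_n)}|^p\,H(\cdot)\to0
\]
by dominated convergence, using \(H\le G\), local integrability, and continuity of \(A_p(f,\cdot)\). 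This yields \(B_p(f\circ\varphi)\ge B_p(f)\). These steps are insensitive to \(p\), including \(0<p<1\).
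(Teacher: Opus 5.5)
Your final argument is correct. On the core implications it matches the paper:
\begin{itemize}
\item (i)$\Rightarrow$(ii) comes from $B_p(\mathcal P_{p,w})=1$.
\item (ii)$\Rightarrow$(iv) forces $A_p(\mathcal P_{p,w},\varphi(a_n))\to1$, uses boundary decay and the unique peak to get $\varphi(a_n)\to w$, and then reads off $H_\varphi(\cdot,a_n)\to0$ from the Hardy--Stein defect identity.
\item (iv)$\Rightarrow$(i) is the moving-dominator (Vitali/Pratt) argument; continuity of $b\mapsto A_p(f,b)$ supplies $\int U_n\to\int U$. Run it in the untransported variable, where (iv) is literally pointwise convergence.
\end{itemize}
The genuine difference is (iii)$\Leftrightarrow$(iv). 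The paper cites Chen--Wulan at $p=2$ and moves exponents via Lemma~\ref{lem:exponent-transfer}. You instead apply the identity to $g(z)=z$, getting
\[
1-\norm{\varphi_a}_{H^p}^p=\frac{p^2}{2\pi}\int_\D|z|^{p-2}H_\varphi\bigl(\sigma_{\varphi(a)}(z),a\bigr)\dA(z).
\]
After substituting $\zeta=\sigma_{\varphi(a_n)}(z)$, the same Vitali argument gives (iv)$\Rightarrow$(iii); the dominator $|\sigma_{b_n}|^{p-2}|\sigma_{b_n}'|^2G(\cdot,b_n)$ has constant integral. A uniform lower bound for the weight on compacts of $\D\setminus\{w\}$, plus diagonal extraction, gives (iii)$\Rightarrow$(iv). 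Your route is self-contained, uniform in $p\in(0,\infty)$, and makes Lemma~\ref{lem:exponent-transfer} unnecessary. The paper's version is shorter only because it leans on the literature.

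Three things to tidy.

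First, discard steps (4)--(5). Norms are only lower semicontinuous under locally uniform convergence, and the limit $\chi$ need not be inner. Take $\varphi(z)=z\exp\bigl(-\frac{1+z}{1-z}\bigr)$, $w=0$ and $a_n\uparrow1$. Each $\varphi_{a_n}$ is inner with $\varphi_{a_n}(0)=0$, so $A_p(\mathcal P_{p,0}\circ\varphi,a_n)=A_p(\mathcal P_{p,0},\varphi(a_n))\to1$. Yet $\varphi_{a_n}\to0$ locally uniformly.

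Second, your identity for $\norm{h}^p-\norm{h\circ\psi_n}^p$ is not exact. Here $\psi_n(0)=\sigma_w(\varphi(a_n))\neq0$, and the kernel that actually appears is $G(\cdot,w)-N_\varphi(\cdot,a_n)$, which is negative near $\varphi(a_n)$. Center at $b_n=\varphi(a_n)$ instead. Applying Hardy--Stein to $(\mathcal P_{p,w}-\mathcal P_{p,w}(w))\circ\sigma_{b_n}$ and to its composition with $\varphi_{a_n}$ yields exactly $\frac{p^2}{2\pi}\int_\D|\mathcal P_{p,w}-\mathcal P_{p,w}(w)|^{p-2}|\mathcal P_{p,w}'|^2H_\varphi(\cdot,a_n)\dA$. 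This has a fixed weight, and both norms tend to $1$.

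Third, Fatou alone gives only $\liminf_nH_\varphi(z,a_n)=0$ a.e., not a single subsequence. Your Harnack idea can close this: use $\int_\D u\dA\le C_{z_0}u(z_0)$ for superharmonic $u\ge0$ at one good point $z_0$. This needs $H_\varphi(\cdot,a)$ to be superharmonic up to l.s.c.\ regularization. The paper's route is simpler. The weight is bounded below on compacts $K$ avoiding its discrete zero set, so $\int_KH_\varphi(\cdot,a_n)\dA\to0$ directly, and a diagonal extraction finishes.
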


The corollary follows immediately from Theorem~\ref{thm:main},
since condition \textup{(iv)} is independent of \(p\).

\begin{corollary}\label{cor:p-independence}
Let \(\varphi:\D\to\D\) be analytic. Then the isometric property of
\(C_\varphi\) with respect to \(\BMOApNorm{\cdot}{p}\) is independent
of \(p\in(0,\infty)\).
\end{corollary}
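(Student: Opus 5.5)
Since condition (iv) of Theorem~\ref{thm:main} makes no reference to \(p\), the corollary should follow by running the equivalence (i)\(\Leftrightarrow\)(iv) at two exponents. The hypothesis \(\varphi(0)=0\) in Theorem~\ref{thm:main} is not assumed in the corollary, so the plan is first to show that any isometric \(C_\varphi\), for any \(p\), forces \(\varphi(0)=0\). Then the symbols inducing isometries for any \(p\) all satisfy that normalization, and the theorem applies to each of them.

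For the reduction, suppose \(C_\varphi\) is an isometry for \(\BMOApNorm{\cdot}{p}\). Test it on the constant function \(1\): both sides equal \(1\), which gives no information. Test it instead on \(f(z)=z\), where \(f\circ\varphi=\varphi\). Here \(B_p(z)\) is the maximum of \(\mathcal A_p\), and this maximum is attained at \(r=0\) when \(p\le4\) and at \(r_p\) when \(p>4\).

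A cleaner route is to use the general Littlewood-type subordination bound \(B_p(f\circ\varphi)\le B_p(f)\). This holds because \(A_p(f\circ\varphi,a)\le A_p(f,\varphi(a))\): the function \(f\circ\sigma_{\varphi(a)}\circ\varphi_a\) is \(f\circ\sigma_{\varphi(a)}\) composed with a self-map fixing \(0\), and Littlewood subordination for the subharmonic function \(|g-g(0)|^p\) gives the inequality. With this bound in hand, the isometry identity reads
\[
|f(\varphi(0))|+B_p(f\circ\varphi)=|f(0)|+B_p(f).
\]
Since \(B_p(f\circ\varphi)\le B_p(f)\), this gives \(|f(\varphi(0))|\ge|f(0)|\) for every \(f\in\BMOA\). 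Now take \(f=\sigma_{\varphi(0)}\), for which \(|f(\varphi(0))|=0\) and \(|f(0)|=|\varphi(0)|\). The inequality then forces \(\varphi(0)=0\).

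With the normalization established, suppose \(C_\varphi\) is an isometry for some \(p_1\) and let \(p_2\in(0,\infty)\) be arbitrary. By the previous step, \(\varphi(0)=0\). Theorem~\ref{thm:main}, (i)\(\Rightarrow\)(iv) at \(p_1\), gives condition (iv). That same condition, read through (iv)\(\Rightarrow\)(i) at \(p_2\), makes \(C_\varphi\) an isometry for \(p_2\).

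The only real obstacle is the normalization step, namely justifying \(B_p(f\circ\varphi)\le B_p(f)\) for all \(p>0\), including \(0<p<1\). This is standard, since \(|h|^p\) is subharmonic for analytic \(h\) at every \(p>0\), so Littlewood subordination applies. Once that step is done, the corollary is immediate from Theorem~\ref{thm:main}.
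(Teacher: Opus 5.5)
Your proposal is correct and follows the paper's route. The normalization $\varphi(0)=0$ is exactly the argument of Proposition~\ref{prop:seminorm-reduction}: compare against the subordination bound $B_p(f\circ\varphi)\le B_p(f)$ and test with $f=\sigma_{\varphi(0)}$. After that, the $p$-free condition (iv) of Theorem~\ref{thm:main} transfers the isometry from $p_1$ to $p_2$.

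The one difference is cosmetic. You justify the subordination bound directly by Littlewood's principle applied to $(f\circ\sigma_{\varphi(a)}-f(\varphi(a)))\circ\varphi_a$, whereas the paper reads it off from the nonnegativity of the defect term in \eqref{eq:defect-formula}. Both are valid for all $0<p<\infty$, and your exploratory paragraph testing $f(z)=z$ can simply be dropped.
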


\section{Preliminaries}\label{sec:preliminaries}

We shall use the following elementary covariance property.

\begin{lemma}\label{lem:covariance}
Let \(0<p<\infty\), let \(f\in\BMOA\), and let \(\tau\) be an
automorphism of \(\D\). Then
\begin{equation}\label{eq:covariance}
    A_p(f\circ\tau,a)=A_p(f,\tau(a)),
    \qquad a\in\D.
\end{equation}
\end{lemma}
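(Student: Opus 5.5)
The plan is to reduce the identity to the classical description of disk automorphisms together with the rotation invariance of \(dm\). Set \(b=\tau(a)\). Both \(\sigma_b\circ\tau\circ\sigma_a\) and \(\sigma_b\) are built from automorphisms, and the first one fixes \(0\), because \(\sigma_a(0)=a\), \(\tau(a)=b\) and \(\sigma_b(b)=0\). An automorphism of \(\D\) fixing the origin is a rotation (Schwarz lemma). Hence there is \(\lambda\in\T\) with
\[
\sigma_b\circ\tau\circ\sigma_a(z)=\lambda z,
\qquad\text{that is}\qquad
\tau\circ\sigma_a=\sigma_b\circ R_\lambda,\quad R_\lambda(z)=\lambda z,
\]
where we used that \(\sigma_b\) is an involution.

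Next I substitute this into the definition \eqref{eq:A-definition}. Since \(f\circ\tau(a)=f(b)\),
\[
A_p(f\circ\tau,a)=\int_\T\bigl|f(\tau(\sigma_a(\zeta)))-f(b)\bigr|^p\dm(\zeta)
=\int_\T\bigl|f(\sigma_b(\lambda\zeta))-f(b)\bigr|^p\dm(\zeta).
\]
The substitution \(\zeta\mapsto\overline\lambda\zeta\) preserves \(dm\). The last integral therefore equals \(A_p(f,b)=A_p(f,\tau(a))\), which is \eqref{eq:covariance}.

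The only point needing care is how the integrals over \(\T\) are interpreted, since \(f\in\BMOA\) is only defined on \(\D\). I would read \(f\circ\sigma_a\) on \(\T\) through its nontangential boundary values; these exist a.e. because \(f\circ\sigma_a\in H^p\). This reading agrees with the \(H^p\)-norm formulation \(A_p(f,a)=\norm{f\circ\sigma_a-f(a)}_{H^p}^p\). Working at the level of analytic functions, the identity \(f\circ\tau\circ\sigma_a=(f\circ\sigma_b)\circ R_\lambda\) holds on \(\D\). For any \(g\in H^p\), the functions \(g\) and \(g\circ R_\lambda\) have the same \(H^p\) norm, because the integral means \(\int_\T|g(r\lambda\zeta)|^p\dm(\zeta)\) are rotation invariant for each \(r\). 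Applying this to \(g=f\circ\sigma_b-f(b)\) gives the claim directly, without needing boundary values at all. I expect no real obstacle beyond this bookkeeping.
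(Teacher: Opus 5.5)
Your proposal is correct and follows the paper's proof essentially verbatim. You show that \(\sigma_{\tau(a)}\circ\tau\circ\sigma_a\) is a rotation \(R_\lambda\), rewrite this as \(\tau\circ\sigma_a=\sigma_{\tau(a)}\circ R_\lambda\), and conclude by rotation invariance of the \(H^p\) integral means; your remark on reading the boundary integral through the \(H^p\)-norm formulation is a harmless clarification that the paper leaves implicit.
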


\begin{proof}
For each \(a\in\D\), the automorphism $
    \sigma_{\tau(a)}\circ\tau\circ\sigma_a$ 
fixes the origin. Hence there exists \(\lambda\in\T\) such that
\[
    \sigma_{\tau(a)}\circ\tau\circ\sigma_a(z)=\lambda z,
\]
or equivalently,
\[
    \tau\circ\sigma_a
    =
    \sigma_{\tau(a)}\circ R_\lambda,
    \qquad
    R_\lambda(z)=\lambda z.
\]
Therefore, by the rotational invariance of the \(H^p\)-quantity,
\[
\begin{aligned}
A_p(f\circ\tau,a)
&=
\norm{f\circ\tau\circ\sigma_a-f(\tau(a))}_{H^p}^{p}\\
&=
\norm{
    \bigl(f\circ\sigma_{\tau(a)}-f(\tau(a))\bigr)
    \circ R_\lambda
}_{H^p}^{p}\\
&=
A_p(f,\tau(a)).
\end{aligned}
\]
\end{proof}
 For the corresponding computation involving the disk automorphisms
\(\sigma_w\), see \cite[proof of Theorem~3.1]{Laitila},
\cite[(2.4)]{ChenWulan}, or \cite[(3.8)]{Pouliasis}.

 Recall from \eqref{eq:green-function}--\eqref{eq:defect-definition}
the Green function \(G\), the Nevanlinna counting function
\(N_\varphi\), and the Lindel\"of--Nevanlinna defect \(H_\varphi\). 
The same computation based on the Hardy--Stein identity, the
non-univalent change-of-variables formula, and the Lindel\"of
principle yields the following identity for every \(0<p<\infty\);
compare \cite[(1.2), pp.~2518--2520]{ChenWulan} and
\cite[Section~4]{Pouliasis}. For \(0<p<\infty\), \(f\in\BMOA\), and
\(a\in\D\),
\begin{equation}\label{eq:defect-formula}
 A_p(f\circ\varphi,a)
 =
 A_p(f,\varphi(a))-\Delta_{p,f,\varphi}(a),
\end{equation}
where
\begin{equation}\label{eq:defect-integral}
 \Delta_{p,f,\varphi}(a)
 =
 \frac{p^2}{2\pi}
 \int_{\D}
 \abs{f(z)-f(\varphi(a))}^{p-2}
 \abs{f'(z)}^2
 H_\varphi(z,a)\dA(z)
 \ge0.
\end{equation} 
For \(p<2\), the apparent singularities at the zeros of
\(f-f(\varphi(a))\) are locally integrable; indeed, near a zero of
order \(m\), the corresponding density is of order
\(|z-z_0|^{mp-2}\), up to the logarithmic factor coming from the
Green function.

Since \(\Delta_{p,f,\varphi}(a)\ge0\),
\eqref{eq:defect-formula} immediately gives
\[
A_p(f\circ\varphi,a)
\le
A_p(f,\varphi(a))
\le
B_p(f)^p.
\]
Taking the supremum over \(a\in\D\), we obtain
\begin{equation}\label{eq:subordination}
B_p(f\circ\varphi)\le B_p(f).
\end{equation}
See \cite[(2.3)]{ChenWulan} and \cite[(3.1)]{Pouliasis}.

\begin{proposition} \label{prop:seminorm-reduction}
Let \(0<p<\infty\). 
The operator \(C_\varphi\) is an isometry for
\(\BMOApNorm{\cdot}{p}\) if and only if
\[
 \varphi(0)=0
 \quad\text{and}\quad
 B_p(f\circ\varphi)=B_p(f)
 \quad\text{for every }f\in\BMOA.
\]
\end{proposition}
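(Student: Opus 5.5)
The sufficiency direction is immediate. If \(\varphi(0)=0\) and \(B_p(f\circ\varphi)=B_p(f)\) for all \(f\in\BMOA\), then \((f\circ\varphi)(0)=f(0)\). Hence \(\BMOApNorm{f\circ\varphi}{p}=|f(0)|+B_p(f)=\BMOApNorm{f}{p}\), so \(C_\varphi\) is an isometry (for the norm or quasi-norm, according as \(p\ge1\) or \(p<1\)).

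For necessity, assume \(\BMOApNorm{f\circ\varphi}{p}=\BMOApNorm{f}{p}\) for every \(f\in\BMOA\).

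First test constants. For a constant \(c\), \(B_p(c)=0\), and the identity is trivially \(|c|=|c|\), which gives nothing. Instead test \(f(z)=z\), i.e. \(f\circ\varphi=\varphi\). Isometry gives
\[
|\varphi(0)|+B_p(\varphi)=0+B_p(z).
\]
By \eqref{eq:subordination}, \(B_p(\varphi)=B_p(\mathrm{id}\circ\varphi)\le B_p(\mathrm{id})\). This alone does not force \(\varphi(0)=0\), so we use the affine family \(f_t(z)=t+z\) for a real or complex parameter \(t\) instead. Since \(B_p\) is invariant under adding constants, isometry gives
\[
|t+\varphi(0)|+B_p(\varphi)=|t|+B_p(z)\qquad\text{for all }t\in\C.
\]
Both sides differ from \(|t+\varphi(0)|\) and \(|t|\) by constants independent of \(t\). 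Therefore \(|t+\varphi(0)|-|t|\) is constant in \(t\). Taking \(t=0\), this constant is \(|\varphi(0)|\). Taking \(t=-\varphi(0)\), it is \(-|\varphi(0)|\). Hence \(\varphi(0)=0\).

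With \(\varphi(0)=0\) established, \((f\circ\varphi)(0)=f(0)\) for every \(f\). The isometry identity \(|f(0)|+B_p(f\circ\varphi)=|f(0)|+B_p(f)\) then cancels the finite term \(|f(0)|\), giving \(B_p(f\circ\varphi)=B_p(f)\).

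No step is a real obstacle. The only point needing care is the choice of test function. Testing with \(f(z)=z\) alone only yields \(|\varphi(0)|=B_p(z)-B_p(\varphi)\ge0\), which does not determine \(\varphi(0)\). Adding the free constant \(t\) removes the dependence on the unknown value of \(B_p(\varphi)\), and this is what makes the argument work.
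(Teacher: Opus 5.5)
Your proof is correct, but it reaches \(\varphi(0)=0\) by a different mechanism than the paper. The paper combines the isometry with the contractivity \eqref{eq:subordination}, \(B_p(f\circ\varphi)\le B_p(f)\), which it derives from the defect identity. This gives \(|f(0)|\le|f(\varphi(0))|\) for every \(f\in\BMOA\), and the single test function \(f=\sigma_{\varphi(0)}\) then yields \(|\varphi(0)|\le0\).

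You use only that \(B_p\) is unchanged when a constant is added. In effect you need just the two test functions \(z\) and \(z-\varphi(0)\). They give \(|\varphi(0)|+B_p(\varphi)=B_p(z)\) and \(B_p(\varphi)=|\varphi(0)|+B_p(z)\), and subtracting forces \(|\varphi(0)|=-|\varphi(0)|\).

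Your route is purely algebraic and never uses \eqref{eq:subordination}. It would therefore apply to any functional of the form \(|f(0)|+S(f)\) with \(S\) finite and invariant under additive constants. The paper's route needs only one test function, but it relies on contractivity.

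You also write out the sufficiency direction, which the paper's proof leaves implicit. The exploratory remarks about constants and about \(f(z)=z\) alone can be dropped from a final write-up.
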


\begin{proof}
Conversely, equality in \eqref{eq:BMOA-p-norm} and
\eqref{eq:subordination} give 
\(\abs{f(0)}\le\abs{f(\varphi(0))}\) for every \(f\in\BMOA\).
Taking \(f=\sigma_{\varphi(0)}\) yields \(\varphi(0)=0\), after which
equality of the full quasi-norms reduces to equality of the oscillation quasi-norms.
\end{proof}

\begin{lemma}
\label{lem:exponent-transfer}
Let \((g_n)\) be analytic self-maps of \(\D\). If $
 \norm{g_n}_{H^{s_0}}\to1$ 
for some \(s_0>0\), then $
 \norm{g_n}_{H^s}\to1$ 
for every \(s>0\).
\end{lemma}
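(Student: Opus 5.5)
Since each \(g_n\) maps \(\D\) into \(\D\), its boundary values satisfy \(|g_n^*|\le1\) almost everywhere on \(\T\). Write \(u_n:=|g_n^*|\in[0,1]\). For every \(s>0\),
\[
\norm{g_n}_{H^s}^s=\int_\T u_n^s\dm\le1 .
\]
The plan is to show that the hypothesis \(\int_\T u_n^{s_0}\dm\to1\) forces \(u_n\to1\) in measure on \(\T\). Then bounded convergence along subsequences, or a direct estimate, gives \(\int_\T u_n^s\dm\to1\) for every \(s>0\).

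First I prove convergence in measure. Fix \(\varepsilon\in(0,1)\) and let \(E_n=\{\zeta\in\T:u_n(\zeta)\le1-\varepsilon\}\). Then
\[
\int_\T u_n^{s_0}\dm\le m(\T\setminus E_n)+(1-\varepsilon)^{s_0}m(E_n)=1-\bigl(1-(1-\varepsilon)^{s_0}\bigr)m(E_n).
\]
Since the left side tends to \(1\), we get \(m(E_n)\to0\).

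Second, for arbitrary \(s>0\) I use the same splitting:
\[
1\ge\int_\T u_n^s\dm\ge\int_{\T\setminus E_n}u_n^s\dm\ge(1-\varepsilon)^s\bigl(1-m(E_n)\bigr).
\]
Therefore \(\liminf_n\norm{g_n}_{H^s}^s\ge(1-\varepsilon)^s\). Letting \(\varepsilon\to0\) gives \(\norm{g_n}_{H^s}^s\to1\), and hence \(\norm{g_n}_{H^s}\to1\).

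The only point needing care is that the \(H^s\) norm is given by the boundary integral \(\int_\T|g_n^*|^s\dm\) for bounded analytic functions. This is standard: for bounded analytic \(g\), \(|g(r\zeta)|^s\to|g^*(\zeta)|^s\) almost everywhere and boundedly as \(r\to1\), and \(M_s(r,g)\) is nondecreasing in \(r\). There is no real obstacle; the argument uses only that \(0\le u_n\le1\), and in particular uses no analyticity beyond the existence of boundary values.
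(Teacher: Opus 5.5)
Your proof is correct, but it takes a different route from the paper's. The paper avoids any \(\varepsilon\)-splitting. With \(X_n=|g_n^*|\in[0,1]\), it uses two pointwise inequalities: \(X_n^s\ge X_n^{s_0}\) for \(0<s\le s_0\), and \(1-X_n^s\le\frac{s}{s_0}(1-X_n^{s_0})\) for \(s>s_0\). The second is a mean-value (Bernoulli-type) inequality applied to \(y=X_n^{s_0}\). Integrating gives the linear estimate
\[
1-\norm{g_n}_{H^s}^s\le\max\{1,s/s_0\}\bigl(1-\norm{g_n}_{H^{s_0}}^{s_0}\bigr).
\]

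Your argument first shows that \(|g_n^*|\to1\) in measure on \(\T\), and then recovers the \(H^s\) integrals by a truncation lower bound. This has two advantages: it treats all \(s>0\) at once without the case split \(s\le s_0\) versus \(s>s_0\), and it gives \(\int_\T\Phi(|g_n^*|)\dm\to\Phi(1)\) for any continuous \(\Phi\) on \([0,1]\), not just powers. The cost is that it is qualitative. If you optimize over \(\varepsilon\), you get only a rate of order \(\bigl(1-\norm{g_n}_{H^{s_0}}^{s_0}\bigr)^{1/2}\), rather than the paper's linear bound. For the paper's purpose, transferring condition (iii) between exponents, convergence alone suffices.

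Both arguments rely on the same standard identification of \(\norm{g}_{H^s}^s\) with \(\int_\T|g^*|^s\dm\) for bounded analytic \(g\). The paper uses this implicitly; you justify it.
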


\begin{proof}
Put \(X_n=\abs{g_n^*}\le1\) almost everywhere. If \(0<s\le s_0\),
then \(X_n^s\ge X_n^{s_0}\). If \(s>s_0\), then
\[
 1-X_n^s\le\frac{s}{s_0}(1-X_n^{s_0}).
\]
The conclusion follows by integration.
\end{proof}

\section{The range \texorpdfstring{\(p>4\)}{p > 4}: radial function and splitting coefficients}\label{sec:high-p-analysis}

\subsection{Extremal behavior of
\texorpdfstring{\(\mathcal A_p\)}{the radial function}}

Fix \(p>4\) and recall the radial function
\(\mathcal A_p\) from \eqref{eq:radial-function}. Set
\begin{equation}\label{eq:beta}
\beta=\frac{p-2}{2}>1,
\qquad
x=r^2,
\qquad
Y_p(x)=\mathcal A_p(\sqrt{x}).
\end{equation}
The hypergeometric expansion and Euler transformation give
\begin{equation}\label{eq:Y-hypergeometric}
Y_p(x)
=
(1-x)^p\,
\prescript{}{2}{F}_{1}
\left(\frac p2,\frac p2;1;x\right)
=
(1-x)\,
\prescript{}{2}{F}_{1}(-\beta,-\beta;1;x).
\end{equation}
where \({}_2F_1\) denotes the Gauss hypergeometric function.
 
\begin{proposition}\label{prop:unique-radius}
For every \(p>4\), the function \(\mathcal A_p\) has a unique critical point
\(r_p\in(0,1)\). This point is the unique strict global maximum and is
nondegenerate:
\[
 \mathcal A_p''(r_p)<0.
\]
Moreover,
\begin{equation}\label{eq:lower-rp}
 r_p^2>\frac{p-4}{p}.
\end{equation}
\end{proposition}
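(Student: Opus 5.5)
We work throughout in the variable \(x=r^2\) of \eqref{eq:beta}. Since \(\mathcal A_p'(r)=2rY_p'(r^2)\), and since at a critical point \(\mathcal A_p''(r_p)=4r_p^2Y_p''(r_p^2)\), it suffices to prove four things: that \(Y_p'\) has exactly one zero \(x_p\in(0,1)\), that \(Y_p'>0\) on \((0,x_p)\) and \(Y_p'<0\) on \((x_p,1)\), that \(Y_p''(x_p)<0\), and that \(x_p>(p-4)/p\).

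\textbf{Step 1: Taylor coefficients of \(Y_p'\).} We use the second form in \eqref{eq:Y-hypergeometric}, \(Y_p=(1-x)Q\), where
\[
Q(x)={}_2F_1(-\beta,-\beta;1;x)=\sum_k c_kx^k,
\qquad
c_k=\Bigl(\tfrac{(-\beta)_k}{k!}\Bigr)^2\ge0 .
\]
A direct computation gives
\[
Y_p'(x)=(1-x)Q'(x)-Q(x)=\sum_{k\ge0}(k+1)(c_{k+1}-c_k)x^k .
\]
The coefficient ratio is \(c_{k+1}/c_k=\bigl((k-\beta)/(k+1)\bigr)^2\) whenever \(c_k\neq0\).
\begin{itemize}
\item For \(k<\beta\) we have \(c_{k+1}>c_k\) exactly when \(\beta-k>k+1\), that is, when \(k<(\beta-1)/2\).
\item For \(k\ge\beta\) we have \(|k-\beta|<k+1\), so the \(c_k\) are nonincreasing.
\end{itemize}
Hence the coefficient sequence of \(Y_p'\) is positive up to some index \(m\ge1\) (note \(c_1-c_0=\beta^2-1>0\)) and nonpositive afterwards, and not identically zero afterwards. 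If \(\beta\) is an integer, \(Q\) is a polynomial and the same statement holds.

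\textbf{Step 2: existence, uniqueness and nondegeneracy.} Set \(h(x)=x^{-m}Y_p'(x)\). Every positive coefficient of \(Y_p'\) is attached to a negative power of \(x\) in \(h\), and every nonpositive coefficient to a nonnegative power. Therefore \(h\) is strictly decreasing on \((0,1)\), and Descartes' rule for power series follows in this concrete form.

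For existence we use the boundary values.
\begin{itemize}
\item \(Y_p'(0)=\beta^2-1>0\).
\item \(Q(1)=\Gamma(1+2\beta)/\Gamma(1+\beta)^2<\infty\), by Gauss's theorem, since \(c-a-b=1+2\beta>0\). Hence \(Y_p(1^-)=0\).
\item \(Y_p(0)=1\) and \(Y_p>0\) on \([0,1)\).
\end{itemize}
So \(Y_p\) attains an interior maximum, and \(Y_p'\) changes sign in \((0,1)\). By the monotonicity of \(h\) this zero \(x_p\) is unique, with \(Y_p'>0\) before it and \(Y_p'<0\) after it. This gives the strict global maximum.

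For nondegeneracy, note \(Y_p''(x_p)=x_p^m h'(x_p)\), because \(h(x_p)=0\). Moreover \(h'<0\): the derivative of the negative-power part is strictly negative since \(m\ge1\) and \(c_1>c_0\). Hence \(Y_p''(x_p)<0\).

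\textbf{Step 3: the bound \eqref{eq:lower-rp}, which is the main obstacle.} Since \(p=2\beta+2\), we have \((p-4)/p=(\beta-1)/(\beta+1)=:x_0\). By Step 2, \(x_p>x_0\) is equivalent to \(Y_p'(x_0)>0\), that is, to
\[
(1-x_0)\,\frac{Q'(x_0)}{Q(x_0)}>1,
\qquad\text{equivalently}\qquad
\frac{xF'(x)}{F(x)}>\frac{px}{1-x}\ \text{ at } x=x_0,
\]
where \(F={}_2F_1(p/2,p/2;1;\cdot)\). I would try two routes, in this order.

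\emph{Riccati comparison.} The function \(u=xF'/F\) satisfies the Riccati equation obtained from the hypergeometric equation
\[
x(1-x)F''+\bigl(1-(p+1)x\bigr)F'-\tfrac{p^2}{4}F=0 .
\]
I would compare \(u\) with the explicit candidate \(v(x)=px/(1-x)\), or a slightly smaller rational function. One checks that \(v\) is a strict subsolution on \((0,x_0]\), and since \(u(0)=v(0)=0\) with \(u'(0)=p^2/4>p=v'(0)\) for \(p>4\), a comparison argument gives \(u>v\) up to \(x_0\).

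\emph{Coefficient fallback.} If the Riccati comparison fails, I would instead bound \(\sum_k(k+1)(c_{k+1}-c_k)x_0^k\) from below directly, using the explicit ratio \(c_{k+1}/c_k\). The aim would be to show that the positive block \(k\le m\) dominates at \(x_0=(\beta-1)/(\beta+1)\).

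Verifying the sign of the Riccati residual on the whole interval \((0,x_0]\) is where I expect the real work to lie.
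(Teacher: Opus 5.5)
Your Steps 1 and 2 are correct and take a different route from the paper. The unimodality of \(c_k=((-\beta)_k/k!)^2\) makes the coefficients \((k+1)(c_{k+1}-c_k)\) of \(Y_p'\) change sign exactly once. The strict monotonicity of \(h=x^{-m}Y_p'\) then gives existence, uniqueness, the sign pattern, and \(Y_p''(x_p)=x_p^mh'(x_p)<0\).

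The gap is Step 3. The bound \eqref{eq:lower-rp} is part of the statement, and you only outline two routes without carrying either out. The Riccati route, as you state it, also rests on a false claim. With \(u=xF'/F\) one has \(x(1-x)u'=\tfrac{p^2}{4}x+pxu-(1-x)u^2\). Substituting \(v=px/(1-x)\) into the right-hand side gives \(u'\)-value \(p^2/(4(1-x))\), so the residual is
\[ \frac{p^2}{4(1-x)}-v'(x)=\frac{p}{(1-x)^2}\Bigl(\frac{p(1-x)}{4}-1\Bigr). \]
This is positive on \([0,x_0)\) but vanishes exactly at \(x=x_0\). So \(v\) is not a strict subsolution on \((0,x_0]\). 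Comparison yields \(u>v\) only on \((0,x_0)\), and the strict inequality at \(x_0\), which is precisely what you need, would still require a separate second-order argument. Replacing \(v\) by a smaller function cannot help, since \(u>\tilde v\) with \(\tilde v<v\) says nothing about whether \(u(x_0)>v(x_0)\).

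The missing idea is that no comparison is needed. Substituting \(G=Y_p/(1-x)\) into the hypergeometric equation for \(G={}_2F_1(-\beta,-\beta;1;x)\) gives \eqref{eq:Y-ODE}. At any critical point of \(Y_p\) this reduces to \(x(1-x)^2Y_p''(x)=(\beta+1)^2(x_0-x)Y_p(x)\). Since \(Y_p(x_p)>0\) and your Step 2 already shows \(Y_p''(x_p)<0\), this forces \(x_p>x_0\) at once. Your vanishing residual is the same identity in disguise: at a zero of \(u-v\), the signs of \((u-v)'\), of \(Y_p''\), and of \(x_0-x\) coincide.

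The paper runs its entire proof on this identity. The first zero of \(Y_p'\) must satisfy \(Y_p''\le0\), hence lies in \([x_0,1)\). The case \(x_p=x_0\) is excluded by differentiating \eqref{eq:Y-ODE} once more, which would give \(Y_p'''(x_0)<0\) and hence \(Y_p'<0\) just to the left of \(x_0\). Any later zero would need \(Y_p''\ge0\) while lying beyond \(x_0\), which is impossible. Thus the paper's route gets the bound together with uniqueness and nondegeneracy. Your coefficient argument never locates the critical point, so it needs this one additional line to finish.
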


\begin{proof}
Let
\[
 G(x)=\prescript{}{2}{F}_{1}(-\beta,-\beta;1;x),
 \qquad
 Y_p(x)=(1-x)G(x).
\]
The hypergeometric differential equation for \(G\) is
\[
 x(1-x)G''
 +
 [1-(1-2\beta)x]G'
 -
 \beta^2G
 =0.
\]
Substituting \(G=Y_p/(1-x)\) gives
\begin{equation}\label{eq:Y-ODE}
 \begin{aligned}
 x(1-x)^2Y_p''
 &+
 (1-x)\bigl[1+(2\beta+1)x\bigr]Y_p'\\
 &+
 (\beta+1)\bigl[(\beta+1)x-(\beta-1)\bigr]Y_p
 =0.
 \end{aligned}
\end{equation}
From \eqref{eq:Y-hypergeometric},
\[
 Y_p(0)=1,
 \qquad
 Y_p'(0)=\beta^2-1>0.
\]
Also \(Y_p(x)\to0\) as \(x\to1^{-}\). Hence \(Y_p'\) has at least one
zero. Let \(x_p\) be its first zero. At a critical point,
\eqref{eq:Y-ODE} becomes
\begin{equation}\label{eq:Y-second-critical}
 x(1-x)^2Y_p''(x)
 =
 (\beta+1)^2
 \left(\frac{\beta-1}{\beta+1}-x\right)Y_p(x).
\end{equation}
Since \(Y_p'>0\) immediately to the left of \(x_p\),
\(Y_p''(x_p)\le0\); therefore
\[
 x_p\ge x_0:=\frac{\beta-1}{\beta+1}=\frac{p-4}{p}.
\]

Equality cannot occur. If \(x_p=x_0\), then
\(Y_p'(x_p)=Y_p''(x_p)=0\). Differentiating
\eqref{eq:Y-ODE} and evaluating at \(x_p\) yields
\[
 x_p(1-x_p)^2Y_p'''(x_p)
 +
 (\beta+1)^2Y_p(x_p)
 =0,
\]
so \(Y_p'''(x_p)<0\). Consequently,
\[
 Y_p'(x_p+h)
 =
 \frac12Y_p'''(x_p)h^2+o(h^2)<0
\]
for all sufficiently small nonzero \(h\), contradicting
\(Y_p'>0\) to the left of the first zero. Thus \(x_p>x_0\), and
\eqref{eq:Y-second-critical} gives \(Y_p''(x_p)<0\).

Suppose \(Y_p'\) has a second zero \(x_2>x_p\), and take the first such
zero. Then \(Y_p'<0\) on \((x_p,x_2)\), so differentiability implies
\(Y_p''(x_2)\ge0\). Since \(x_2>x_p>x_0\),
\eqref{eq:Y-second-critical} gives \(Y_p''(x_2)<0\), a contradiction.
Thus \(x_p\) is the unique critical point. Since
\(Y_p'(0)>0\), \(Y_p''(x_p)<0\), and \(Y_p(x)\to0\) at \(1\), it is the
unique strict global maximum.

Set \(r_p=\sqrt{x_p}\). At the critical point,
\[
 \mathcal A_p''(r_p)
 =
 4r_p^2Y_p''(x_p)<0.
\]
The estimate \eqref{eq:lower-rp} follows from \(x_p>x_0\).
\end{proof}
 
\subsection{Positivity of the splitting coefficients}

To split the circle of maximizing centers obtained above, we perturb
the identity function by higher-order monomials. The first variation
of the corresponding \(A_p\)-quantity is governed by the
coefficients defined below. For the perturbation used later, the
crucial point is the positivity of the coefficients corresponding to
the cubic and quartic terms at the maximizing radius \(r_p\).

For an integer \(m\ge2\), set
\begin{equation}\label{eq:Cm}
C_{m,p}(r)
=
\int_{\T}
\abs{\sigma_r(\eta)-r}^{p-2}
\overline{\sigma_r(\eta)-r}
\bigl(\sigma_r(\eta)^m-r^m\bigr)\dm(\eta).
\end{equation}
Reflection symmetry about the real axis gives $
C_{m,p}(r)\in\mathbb R$.

The required signs are established in the following proposition.

\begin{proposition} 
\label{prop:C-positive}
For every \(p>4\),
\[
 C_{3,p}(r_p)>0
 \quad\text{and}\quad
 C_{4,p}(r_p)>0.
\]
\end{proposition}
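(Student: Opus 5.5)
The plan is to reduce each \(C_{m,p}(r)\) to a short combination of hypergeometric quantities. At \(r=r_p\) this combination should collapse, via the critical-point equation of Proposition~\ref{prop:unique-radius}, to an explicit factor times \(Y_p(x_p)>0\). The positivity of that factor should then follow from the lower bound \eqref{eq:lower-rp}.

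\emph{Step 1 (algebraic reduction).} Write \(u=\sigma_r(\eta)\). A direct computation gives
\[
u-r=-\frac{(1-r^2)\eta}{1-r\eta},
\qquad
u^m-r^m=(u-r)\sum_{k=0}^{m-1}r^{m-1-k}u^k .
\]
Hence the integrand of \eqref{eq:Cm} equals \(|u-r|^p\sum_k r^{m-1-k}u^k\), and so
\[
C_{m,p}(r)=(1-r^2)^p\sum_{k=0}^{m-1}r^{m-1-k}M_k(r),
\qquad
M_k(r):=\int_{\T}\frac{1}{|1-r\eta|^{p}}\Bigl(\frac{r-\eta}{1-r\eta}\Bigr)^k\dm(\eta).
\]
In particular \((1-r^2)^pM_0=\mathcal A_p(r)\). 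A sign check is needed here: as \(r\to1\) the mass concentrates near \(\eta=1\), where \(u\approx-1\). So \(M_1\) and \(M_3\) may be negative, and positivity cannot be read off termwise. That is why the critical-point relation has to be used.

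\emph{Step 2 (hypergeometric form and contiguous reduction).} I will expand
\[
|1-r\eta|^{-p}=(1-r\eta)^{-p/2}(1-r\bar\eta)^{-p/2}
\]
and \((r-\eta)^k(1-r\eta)^{-k}\) binomially, then integrate term by term. This writes each \(M_k\), for \(k\le3\), as a finite combination of functions of the form \(r^{j}\,{}_2F_1(\tfrac p2+a,\tfrac p2+b;c;r^2)\) with small integer shifts \(a,b,c\).

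Next I will use Gauss contiguous relations, together with the Euler transformation already used in \eqref{eq:Y-hypergeometric}, to express everything through \(Y_p(x)\) and \(Y_p'(x)\) with \(x=r^2\). A useful consistency check is that the first variation with respect to the base point must reproduce \(\mathcal A_p'(r)\). This expresses a suitable combination of \(M_0\) and \(M_1\) through \(Y_p'\), and that identity anchors the reduction.

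The target is a formula
\[
C_{m,p}(r)=P_m(x,\beta)\,Y_p(x)+Q_m(x,\beta)\,x(1-x)Y_p'(x),
\]
with \(P_m\) and \(Q_m\) explicit rational functions. If a second derivative appears, I will eliminate it using \eqref{eq:Y-ODE}.

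\emph{Step 3 (evaluation at \(r_p\)).} At \(x=x_p\) we have \(Y_p'(x_p)=0\), so
\[
C_{m,p}(r_p)=P_m(x_p,\beta)\,Y_p(x_p),
\]
and \(Y_p(x_p)>0\) because \(Y_p>0\) on \([0,1)\). It remains to show \(P_3>0\) and \(P_4>0\) on the range \(x>(\beta-1)/(\beta+1)\) for \(\beta>1\), which is exactly what \eqref{eq:lower-rp} provides. I expect \(P_m\) to be a low-degree polynomial in \(x\), up to a positive factor. Its positivity on \(((\beta-1)/(\beta+1),1)\) should then follow from checking the endpoint values and monotonicity, or from a factorization.

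\emph{Main obstacle.} The delicate part is Steps 2 and 3 together. The reduction must produce a \(P_m\) whose sign is actually controlled by \(x>(p-4)/p\) alone. If it is not, I will sharpen the location of \(x_p\), for instance with an upper bound obtained from \eqref{eq:Y-second-critical} or from an explicit sign check of \(Y_p'\) at a test point. Alternatively, I will keep the \(Y_p'\) term and use its sign on the appropriate side of \(x_p\).
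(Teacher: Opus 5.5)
There is a genuine gap in Step 3. Steps 1--2 are sound and retrace the paper's route. Up to the positive factor $(1-r^2)^{p-1}$, your $M_k$ are the moments $\int_{\T}|u-r|^{p-2}u^k\,dm(u)$. The critical-point condition is equivalent to $M_1(r_p)=-r_pM_0(r_p)$, and the three-term contiguous recurrence in $k$ expresses $M_2,M_3$ through $M_0,M_1$. Carried out, your reduction gives, with $x_p=r_p^2$,
\[
C_{3,p}(r_p)=\frac{\beta-1-x_p}{\beta+2}\,Y_p(x_p),
\qquad
C_{4,p}(r_p)=-\frac{Q_\beta(x_p)}{r_p(\beta+2)(\beta+3)}\,Y_p(x_p),
\]
where $Q_\beta(x)=(\beta+5)x^2+(5-3\beta)x+2(1-\beta)$.

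The lower bound \eqref{eq:lower-rp} points the wrong way. The factor $\beta-1-x$ is negative on $(\beta-1,1)$, and this interval lies inside $\bigl((\beta-1)/(\beta+1),1\bigr)$ whenever $1<\beta<2$. Since $Q_\beta(1)=12-4\beta$, the factor $-Q_\beta$ is negative near $x=1$ whenever $\beta<3$. So for $4<p<8$ the signs do not follow from $x_p>(p-4)/p$. You would need an \emph{upper} bound on $x_p$: $x_p<\beta-1$, and $x_p$ below the positive root of $Q_\beta$. Only for $p\ge8$ does your plan close as stated. There $\beta-1\ge2$, and $Q_\beta<0$ on $(0,1)$ by convexity, which is the paper's Case 3.

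Your fallbacks do not supply the upper bound. \eqref{eq:Y-second-critical} only bounds critical points from below by $(\beta-1)/(\beta+1)$. A sign check of $Y_p'$ at the natural test point $x=\beta-1$ is, by the $k=1$ recurrence (where $1+x=\beta$), equivalent to $M_2(\sqrt{\beta-1})>0$. That is exactly the fact you have not established.

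The missing idea is sign information on the moments valid for \emph{all} $r$, not only at $r_p$. The paper gets it from Euler's transformation
\[
M_k(r)=\frac{(-\beta)_k}{k!}\,r^k(1-r^2)^{1+2\beta}\,{}_2F_1(k+1+\beta,1+\beta;k+1;r^2),
\]
whose hypergeometric factor has positive Taylor coefficients. This gives $M_2>0$ on $(0,1)$. Hence $C_{3,p}(r_p)=(1-r_p^2)M_2(r_p)>0$ at once, and a posteriori $x_p<\beta-1$. For $C_{4,p}(r_p)=(1-r_p^2)\bigl(M_3(r_p)+r_pM_2(r_p)\bigr)$, it gives $M_3\ge0$ when $1<\beta\le2$. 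For $2<\beta<3$, a termwise comparison of ${}_2F_1(\beta+4,\beta+1;4;x)$ with ${}_2F_1(\beta+3,\beta+1;3;x)$ yields $-M_3\le\frac{\beta-2}{3}\,rM_2<rM_2$. The critical-point algebra you propose is needed only for $\beta\ge3$.
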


\begin{proof}
We divide the proof into four steps.

\smallskip
\noindent
\textit{Step 1: }
For \(0<r<1\), consider the Laurent expansion of
\[
|u-r|^{2\beta} =
(1-ru)^\beta(1-ru^{-1})^\beta,
\qquad u\in\mathbb T.
\]
For \(k\geq0\), let \(M_k(r)\) denote the coefficient of
\(u^{-k}\) in this Laurent expansion. Equivalently,
\begin{equation}\label{eq:Mk}
M_k(r)
=
\int_{\mathbb T}|u-r|^{2\beta}u^k\dm(u)
=
\int_{\mathbb T}|u-r|^{p-2}u^k\dm(u).
\end{equation} Since
\(|u-r|^{2\beta}\) is invariant under \(u\mapsto\bar u\), these
coefficients are real and satisfy \(M_{-k}(r)=M_k(r)\).

For \(m\ge2\),
\begin{equation}\label{eq:C-moment-formula}
 C_{m,p}(r)
 =
 (1-r^2)
 \sum_{j=0}^{m-1}r^jM_{m-1-j}(r).
\end{equation}
In particular,
\begin{align}
 C_{2,p}(r)
 &=
 (1-r^2)(M_1+rM_0),\label{eq:C2}\\
 C_{3,p}(r)
 &=
 (1-r^2)(M_2+rM_1+r^2M_0),\label{eq:C3}\\
 C_{4,p}(r)
 &=
 (1-r^2)(M_3+rM_2+r^2M_1+r^3M_0).
 \label{eq:C4}
\end{align}
Factoring \(\sigma_r^m-r^m\) and applying the boundary change of
variables \(u=\sigma_r(\eta)\) give \eqref{eq:C-moment-formula}.

\smallskip
\noindent
\textit{Step 2: }
For \(0<r<1\),
\begin{equation}\label{eq:C2-derivative}
 pC_{2,p}(r)=-(1-r^2)\mathcal A_p'(r).
\end{equation}
Consequently,
\begin{equation}\label{eq:M1-critical}
 M_1(r_p)=-r_pM_0(r_p).
\end{equation}
For real \(s\in(-1,1)\), let
\[
 \tau_s(z)=\frac{z+s}{1+sz}.
\]
For \(s\) sufficiently close to \(0\), one has \(\tau_s(r)>0\), and
the covariance identity gives
\[
    A_p(\tau_s,r)=\mathcal A_p(\tau_s(r)).
\]
Moreover,
\[
 \tau_s(z)=z+s(1-z^2)+O(s^2)
\]
uniformly on compact subsets. The constant term in \(1-z^2\) cancels
in the oscillation difference, so differentiation at \(s=0\) gives
\[
 \left.\frac{d}{ds}A_p(\tau_s,r)\right|_{s=0}
 =
 -pC_{2,p}(r).
\]
On the other hand,
\[
 \left.\frac{d}{ds}\mathcal A_p(\tau_s(r))\right|_{s=0}
 =
 (1-r^2)\mathcal A_p'(r).
\]
This proves \eqref{eq:C2-derivative}. At \(r=r_p\),
\(\mathcal A_p'(r_p)=0\), hence \(C_{2,p}(r_p)=0\). Equation
\eqref{eq:M1-critical} follows from \eqref{eq:C2}.

\smallskip
\noindent
\textit{Step 3:  }  
For \(k\ge1\),
\begin{equation}\label{eq:moment-recurrence}
 r(k+\beta+1)M_{k+1}
 -
 k(1+r^2)M_k
 +
 r(k-\beta-1)M_{k-1}
 =0.
\end{equation}
On \(\T\),
\[
 W_r(u):=\abs{u-r}^{2\beta}
 =
 (1-ru)^\beta(1-r/u)^\beta.
\]
A direct differentiation gives
\[
 (1-ru)(1-r/u)\,uW_r'(u)
 =
 \beta r(u^{-1}-u)W_r(u).
\]
Take the constant Laurent coefficient after multiplication by \(u^k\).
Using that the constant coefficient of
\(u\frac{d}{du}(u^kPW_r)\) vanishes, where
\(P=(1-ru)(1-r/u)\), gives \eqref{eq:moment-recurrence}.

At \(r=r_p\), the case \(k=1\), together with
\eqref{eq:M1-critical}, yields
\begin{equation}\label{eq:M2-critical}
 M_2(r_p)
 =
 \frac{\beta-1-r_p^2}{\beta+2}M_0(r_p).
\end{equation}

For \(k\ge0\),
\begin{equation}\label{eq:Mk-hypergeometric}
 M_k(r)
 =
 \frac{(-\beta)_k}{k!}r^k \cdot 
 \prescript{}{2}{F}_{1}(-\beta,k-\beta;k+1;r^2).
\end{equation}
Equivalently,
\begin{equation}\label{eq:Mk-Euler}
 \begin{aligned}
 M_k(r)
 &=
 \frac{(-\beta)_k}{k!}r^k(1-r^2)^{1+2\beta}\cdot
 \prescript{}{2}{F}_{1}(k+1+\beta,1+\beta;k+1;r^2).
 \end{aligned}
\end{equation}
The hypergeometric factor on the second line has strictly positive
Taylor coefficients. To verify the formula, expand
\[
 (1-ru)^\beta
 =
 \sum_{j\ge0}\frac{(-\beta)_j}{j!}r^ju^j,
 \qquad
 (1-r/u)^\beta
 =
 \sum_{\ell\ge0}\frac{(-\beta)_\ell}{\ell!}r^\ell u^{-\ell}.
\]
The constant coefficient of \(u^kW_r(u)\) is obtained by setting
\(\ell=k+j\), which gives
\[
 M_k(r)
 =
 \frac{(-\beta)_k}{k!}r^k
 \sum_{j=0}^{\infty}
 \frac{(-\beta)_j(k-\beta)_j}{(k+1)_j\,j!}r^{2j}.
\]
This is \eqref{eq:Mk-hypergeometric}. Euler's transformation gives
\eqref{eq:Mk-Euler}.

\smallskip
\noindent
\textit{Step 4: signs of \(C_{3,p}\) and \(C_{4,p}\).}
At \(r=r_p\), equations \eqref{eq:C3} and
\eqref{eq:M1-critical} give
\begin{equation}\label{eq:C3-M2}
 C_{3,p}(r_p)=(1-r_p^2)M_2(r_p).
\end{equation}
Since
\[
 (-\beta)_2=\beta(\beta-1)>0
\]
and the hypergeometric factor in \eqref{eq:Mk-Euler} is positive,
\[
 M_2(r)>0
 \qquad(0<r<1).
\]
Thus \(C_{3,p}(r_p)>0\). Combining this fact with
\eqref{eq:M2-critical} also gives
\begin{equation}\label{eq:upper-rp}
 r_p^2<\beta-1=\frac{p-4}{2}.
\end{equation}

For \(C_{4,p}\), equations \eqref{eq:C4} and
\eqref{eq:M1-critical} give
\begin{equation}\label{eq:C4-reduced}
 C_{4,p}(r_p)
 =
 (1-r_p^2)\bigl(M_3(r_p)+r_pM_2(r_p)\bigr).
\end{equation}
It remains to prove that the quantity in parentheses is positive.

\smallskip
\noindent
\emph{Case 1: \(1<\beta\le2\).}
By \eqref{eq:Mk-Euler},
\[
 \sgn M_3(r)=\sgn(-\beta)_3.
\]
For \(1<\beta\le2\),
\[
 (-\beta)_3=(-\beta)(1-\beta)(2-\beta)\ge0.
\]
Hence \(M_3\ge0\), while \(M_2>0\), and therefore
\(M_3+rM_2>0\).

\smallskip
\noindent
\emph{Case 2: \(2<\beta<3\).}
Set \(x=r^2\). From \eqref{eq:Mk-Euler},
\[
 M_2
 =
 \frac{\beta(\beta-1)}2
 r^2(1-x)^{1+2\beta}D_\beta(x),
\]
where
\[
 D_\beta(x)=\prescript{}{2}{F}_{1}(\beta+3,\beta+1;3;x),
\]
and
\[
 -M_3
 =
 \frac{\beta(\beta-1)(\beta-2)}6
 r^3(1-x)^{1+2\beta}E_\beta(x),
\]
where
\[
 E_\beta(x)=\prescript{}{2}{F}_{1}(\beta+4,\beta+1;4;x).
\]
For every \(n\ge0\), the ratio of the \(n\)-th Taylor coefficient of \(E_\beta\) to the corresponding coefficient of \(D_\beta\) is
\[
\frac{3(n+\beta+3)}{(\beta+3)(n+3)}
\leq1.
\]
Thus \(E_\beta(x)\le D_\beta(x)\) for \(0\le x<1\), and
\[
 \frac{-M_3}{rM_2}
 \le
 \frac{\beta-2}{3}
 <1.
\]
Hence \(M_3+rM_2>0\).

\smallskip
\noindent
\emph{Case 3: \(\beta\ge3\).}
The recurrence \eqref{eq:moment-recurrence} with \(k=2\), together with
\eqref{eq:M1-critical} and \eqref{eq:M2-critical}, gives
\begin{equation}\label{eq:M3-plus}
 M_3(r_p)+r_pM_2(r_p)
 =
 -\frac{M_0(r_p)}
 {r_p(\beta+2)(\beta+3)}
 Q_\beta(r_p^2).
\end{equation}
where
\[
 Q_\beta(x)
 =
 (\beta+5)x^2+(5-3\beta)x+2(1-\beta).
\]
Now
\[
 Q_\beta(0)=2(1-\beta)<0,
 \qquad
 Q_\beta(1)=12-4\beta\le0,
\]
and \(Q_\beta''(x)=2(\beta+5)>0\). Since a convex function lies below
the chord joining its endpoint values,
\[
 Q_\beta(x)<0
 \qquad(0<x<1).
\]
Equation \eqref{eq:M3-plus} therefore yields \(M_3+rM_2>0\).

The three cases prove \(C_{4,p}(r_p)>0\).
\end{proof}

\section{Proof of Theorem~\ref{thm:peak-family}}\label{sec:peak-family-proof}

\begin{proof}[Proof of Theorem~\ref{thm:peak-family}]
	First suppose that \(0<p\le4\). Fix \(w\in\D\) and set
	\(\mathcal P_{p,w}:=\sigma_w\). By the covariance identity
	\eqref{eq:covariance},
	\[
	A_p(\sigma_w,a)=A_p(\sigma_0,\sigma_a(w)).
	\]
	Since \(m\) is normalized, monotonicity of the \(L^q\)-means and
	\cite[Lemma~2.5]{ChenWulan} give
	\[
	\begin{aligned}
		A_p(\sigma_w,a)^{1/p}
		&\le A_4(\sigma_w,a)^{1/4} =\bigl(1-\abs{\sigma_a(w)}^4\bigr)^{1/4}.
	\end{aligned}
	\]
	At \(a=w\), the value equals \(1\). If \(a\ne w\), then
	\(\sigma_a(w)\ne0\), and hence \(A_p(\sigma_w,a)<1\). Therefore
	\[
	B_p(\sigma_w)=1,
	\qquad
	A_p(\sigma_w,w)=1,
	\]
	and \(w\) is the unique maximizing center.
	
	It remains to consider \(p>4\). Fix such a \(p\). We prove that every
	sufficiently small positive \(t\) produces the required peak center. Put
	\[
	F_t(z)=z+t(z^3+z^4),
	\qquad
	\mathcal J(t,a):=A_p(F_t,a).
	\]
	For \(a=re^{i\theta}\), set
	\begin{equation}\label{eq:J-polar}
		\widetilde{\mathcal J}(t,r,\theta)
		:=
		\mathcal J(t,re^{i\theta})
		=
		A_p(F_t,re^{i\theta}).
	\end{equation}
	
	\smallskip
	\noindent
	\textit{Claim 1: first-order angular splitting.}
	For \(a=re^{i\theta}\), differentiation under the integral gives
	\begin{equation}\label{eq:first-variation}
		\left.
		\partial_t\widetilde{\mathcal J}(t,r,\theta)
		\right|_{t=0}
		=
		p\left[
		C_{3,p}(r)\cos2\theta+C_{4,p}(r)\cos3\theta
		\right].
	\end{equation}
	Indeed, after writing \(\zeta=e^{i\theta}\eta\), one has $
	\sigma_{re^{i\theta}}(e^{i\theta}\eta)
	=e^{i\theta}\sigma_r(\eta)$. 
	Thus the monomial \(z^m\) contributes the angular factor
	\(e^{i(m-1)\theta}\), while reflection symmetry makes
	\(C_{m,p}(r)\) real. This proves \eqref{eq:first-variation}.
	
	On the unperturbed maximizing circle, set
	\begin{equation}\label{eq:angular-g}
		g_p(\theta)
		=
		p\bigl(
		C_{3,p}(r_p)\cos2\theta+C_{4,p}(r_p)\cos3\theta
		\bigr).
	\end{equation}
	By Proposition~\ref{prop:C-positive}, both coefficients are positive. Hence
	\[
	g_p(\theta)
	\le
	p\bigl(C_{3,p}(r_p)+C_{4,p}(r_p)\bigr),
	\]
	and equality requires \(\cos2\theta=\cos3\theta=1\). Since
	\(\gcd(2,3)=1\), this occurs only for
	\(\theta\in2\pi\mathbb Z\). Moreover,
	\[
	g_p''(0)
	=
	-p\bigl(4C_{3,p}(r_p)+9C_{4,p}(r_p)\bigr)<0.
	\]
	Thus \(g_p\) has a unique nondegenerate maximum at \(0\).
	
	\smallskip
	\noindent
\textit{Claim 2: uniform \(C^3\) expansion near the maximizing circle.}
For any \(0<\eta<\min\{r_p,1-r_p\}\), define the annular neighborhood
\[
\mathcal N_\eta
:=
\{re^{i\theta}:r_p-\eta\le r\le r_p+\eta\}.
\]
	There exist \(t_0>0\) and \(C>0\) such that, for \(\abs t<t_0\),
	\begin{equation}\label{eq:claim-C3-expansion}
		\widetilde{\mathcal J}(t,r,\theta)
		=
		\mathcal A_p(r)+tG_p(r,\theta)+t^2R_t(r,\theta),
	\end{equation}
	where
	\[
	G_p(r,\theta)
	=
	p\bigl(C_{3,p}(r)\cos2\theta+C_{4,p}(r)\cos3\theta\bigr)
	\]
	and
\[
\sup_{\abs t<t_0}
\norm{R_t}_{C^3([r_p-\eta,r_p+\eta]\times\T)}
\le C.
\]
	If
\[
\mathcal A_p''(r)\le-\kappa<0
\qquad
(r_p-\eta\le r\le r_p+\eta),
\]
then, for sufficiently small \(t\), the equation $
\partial_r\widetilde{\mathcal J}(t,r,\theta)=0$ 
has at most one solution
\(r=r(t,\theta)\in(r_p-\eta,r_p+\eta)\).
If such a solution exists for every \(\theta\in\mathbb T\), then
\begin{equation}\label{eq:claim-r-t}
\|r(t,\cdot)-r_p\|_{C^2(\mathbb T)}
=
O(|t|).
\end{equation}
	
To prove the claim, write $
X(r,\theta,\zeta)
=
\sigma_{re^{i\theta}}(\zeta)-re^{i\theta}$, 
and let \(Y\) denote the corresponding oscillation difference
generated by \(z^3+z^4\). On \(\mathcal N_\eta\times\T\),
\[
|X|
=
\frac{1-r^2}{|1-re^{-i\theta}\zeta|}
\ge 1-r_p-\eta>0.
\]
Hence, for \(|t|\) sufficiently small, \(|X+tY|\) is uniformly
bounded away from zero, while \(X\), \(Y\), and their derivatives up
to order three in \((r,\theta)\) are uniformly bounded. Taylor's
formula and differentiation under the integral therefore yield
\[
\widetilde{\mathcal J}(t,r,\theta)
=
\mathcal A_p(r)+tG_p(r,\theta)+O_{C^3}(t^2)
\]
uniformly on the annulus.

If \(\mathcal A_p''\le-\kappa<0\) there, then for sufficiently small
\(t\),
\[
\partial_{rr}\widetilde{\mathcal J}\le-\kappa/2.
\]
Thus the radial critical point, whenever it exists, is unique. The
implicit function theorem and the preceding \(C^3\)-expansion then
give
\[
\|r(t,\cdot)-r_p\|_{C^2(\T)}=O(|t|),
\]
which proves the claim.
	
	We now complete the global argument. Let $
	M_p=\mathcal A_p(r_p)$. 
	Since \(\mathcal A_p''(r_p)<0\), choose  $
	0<\eta<\frac12\min\{r_p,1-r_p\}$ 
	and \(\kappa>0\) such that
\[
\mathcal A_p''(r)\le-\kappa
\qquad
(r_p-\eta\le r\le r_p+\eta).
\]
    Since \(r_p\) is the unique strict global maximum of
\(\mathcal A_p\), there exists \(\gamma>0\) such that
	\begin{equation}\label{eq:global-gap}
		\mathcal A_p(r)\le M_p-4\gamma
		\quad\text{whenever}\quad
		\abs{r-r_p}\ge\eta.
	\end{equation}
	
	Let \(h(z)=z^3+z^4\). On \(\T\),
	\[
	\abs{\sigma_a(\zeta)-a}\le2,
	\qquad
	\abs{h(\sigma_a(\zeta))-h(a)}\le4.
	\]
	The mean-value inequality for \(s\mapsto\abs{s}^p\) therefore gives
	a constant \(K_p\), independent of \(a\), such that
	\begin{equation}\label{eq:uniform-t}
		\abs{\mathcal J(t,a)-\mathcal J(0,a)}
		\le K_p\abs t
	\end{equation}
	for all \(a\in\D\) and all sufficiently small \(t\).
	Choose \(t>0\) sufficiently small that \(K_pt<\gamma\). From
	the boundary estimate
	\[
	\mathcal J(t,a)
	\le
	\left(\sup_{z\in\overline\D}\abs{F_t'(z)}\right)^p
	\mathcal A_p(\abs a)
	\longrightarrow0
	\qquad(\abs a\to1),
	\]
	the function \(a\mapsto\mathcal J(t,a)\) attains its maximum in
	\(\D\). From \eqref{eq:global-gap}--\eqref{eq:uniform-t}, every global
	maximizer of \(a\mapsto\mathcal J(t,a)\) lies in the annulus
	\begin{equation}\label{eq:annulus}
		\abs{\abs a-r_p}<\eta.
	\end{equation}
	
	  On this closed annulus,
	\[
	\abs{\sigma_a(\zeta)-a}\ge1-\abs a
	\ge1-r_p-\eta>0.
	\]
	By Claim~2, we obtain
	\begin{equation}\label{eq:C2-expansion}
		\widetilde{\mathcal J}(t,r,\theta)
		=
		\mathcal A_p(r)+tG_p(r,\theta)+O_{C^2}(t^2),
	\end{equation}
	uniformly on the annulus, where
	\[
	G_p(r,\theta)
	=
	p\bigl(
	C_{3,p}(r)\cos2\theta+
	C_{4,p}(r)\cos3\theta
	\bigr).
	\]
	For sufficiently small \(t>0\),
\[
\partial_{rr}\widetilde{\mathcal J}(t,r,\theta)
\le-\frac{\kappa}{2}
\]
throughout the annulus. Since \(r_p\) is a strict radial maximum and
\(\mathcal A_p''<0\) on the radial interval,
\[
\mathcal A_p'(r_p-\eta)>0,
\qquad
\mathcal A_p'(r_p+\eta)<0.
\]
By the uniform \(C^1\)-convergence as \(t\to0\), these signs persist:
\[
\partial_r\widetilde{\mathcal J}
(t,r_p-\eta,\theta)>0,
\qquad
\partial_r\widetilde{\mathcal J}
(t,r_p+\eta,\theta)<0
\]
uniformly in \(\theta\). Hence the intermediate value theorem gives,
for every \(\theta\in\mathbb T\), a solution
\[
r(t,\theta)\in(r_p-\eta,r_p+\eta)
\]
of
\[
\partial_r\widetilde{\mathcal J}
\bigl(t,r(t,\theta),\theta\bigr)=0.
\]
The strict radial concavity gives uniqueness. Claim~2 therefore yields
\begin{equation}\label{eq:r-t}
\|r(t,\cdot)-r_p\|_{C^2(\mathbb T)}
=
O(t).
\end{equation}

	Define the reduced angular function $
	K(t,\theta)
	=
	\widetilde{\mathcal J}\bigl(t,r(t,\theta),\theta\bigr)$. 
	Using \eqref{eq:C2-expansion}, \eqref{eq:r-t}, and
	\(\mathcal A_p'(r_p)=0\), we obtain
	\begin{equation}\label{eq:reduced-expansion}
		K(t,\theta)
		=
		M_p+t g_p(\theta)+O_{C^2}(t^2).
	\end{equation}
	By Claim~1, choose a neighborhood \(U\) of \(0\) and
	constants \(c_p,d_p>0\) such that
	\[
	g_p''(\theta)\le-c_p
	\quad(\theta\in U),
	\]
	and
	\[
	g_p(0)-g_p(\theta)\ge d_p
	\quad(\theta\notin U).
	\]
	Equation \eqref{eq:reduced-expansion} implies, for sufficiently small
	positive \(t\),
	\[
	K(t,0)>K(t,\theta)
	\quad(\theta\notin U)
	\]
	and
	\[
	\partial_{\theta\theta}K(t,\theta)<0
	\quad(\theta\in U).
	\]
	Because \(F_t\) has real coefficients,
	\[
	\mathcal J(t,\overline a)=\mathcal J(t,a).
	\]
	The unique radial maximizer therefore satisfies $
	r(t,-\theta)=r(t,\theta)$, 
	so \(K(t,\cdot)\) is even and $
	\partial_\theta K(t,0)=0$. 
	Strict concavity on \(U\) now shows that \(0\) is its unique maximum
	there. It is also the unique global maximum by the strict gap outside
	\(U\). Hence $
	a_t=r(t,0)\in(0,1)$ 
	is the unique global maximizer of \(a\mapsto\mathcal J(t,a)\).
	
	Finally, at \((t,r(t,0),0)\) one has $
	\partial_{rr}\widetilde{\mathcal J}(t,r(t,0),0)<0$. 
	Since $
	\partial_r\widetilde{\mathcal J}
	\bigl(t,r(t,\theta),\theta\bigr)=0$, 
	implicit differentiation gives the  identity
	\[
	\partial_{\theta\theta}K(t,0)
	=
	\partial_{\theta\theta}\widetilde{\mathcal J}(t,r(t,0),0)
	-
	\frac{
		\bigl(
		\partial_{r\theta}\widetilde{\mathcal J}(t,r(t,0),0)
		\bigr)^2
	}{
		\partial_{rr}\widetilde{\mathcal J}(t,r(t,0),0)
	}<0,
	\]
	while $
	\partial_{rr}\widetilde{\mathcal J}(t,r(t,0),0)<0$. 
	The Hessian is therefore negative definite in the regular polar
	coordinates \(r>0\), and the maximum is nondegenerate.
	
	Set
	\[
	\tau_p=t,
	\qquad
	\rho_p=a_t,
	\qquad
	F_p=F_t,
	\qquad
	\Lambda_p=A_p(F_p,\rho_p)^{1/p}=B_p(F_p).
	\]
	For \(w\in\D\), define \(\mathcal P_{p,w}\) by
	\eqref{eq:high-p-peak}. The covariance identity
	\eqref{eq:covariance} gives
	\[
	A_p(\mathcal P_{p,w},a)
	=
	\Lambda_p^{-p}
	A_p\bigl(F_p,\sigma_{\rho_p}(\sigma_w(a))\bigr).
	\]
	This is maximal precisely when
	\[
	\sigma_{\rho_p}(\sigma_w(a))=\rho_p,
	\]
	equivalently when \(a=w\). The factor \(\Lambda_p^{-1}\) makes the
	peak value and the quasi-norm equal to \(1\), while
	nondegeneracy is preserved by the automorphisms and the nonzero scalar
	factor. This completes the proof of Theorem~\ref{thm:peak-family}.
\end{proof}

\section{Proof of  Theorem \ref{thm:main}}\label{sec:main-proof}

\begin{proof}[Proof of Theorem \ref{thm:main}] 
	The implication
	\textup{(i)}\(\Rightarrow\)\textup{(ii)}
	follows from Proposition~\ref{prop:seminorm-reduction}. Indeed, since
	\(\varphi(0)=0\), condition \textup{(i)} gives
	\[
	B_p(f\circ\varphi)=B_p(f)
	\qquad(f\in\BMOA).
	\]
	Taking \(f=\mathcal P_{p,w}\) and using
	\(B_p(\mathcal P_{p,w})=1\), we obtain condition
	\textup{(ii)}.

We next prove \textup{(ii)}\(\Rightarrow\)\textup{(iv)}. Assume
\textup{(ii)}.

Fix \(w\in\D\), put \(P=\mathcal P_{p,w}\), and choose
\(\{a_n\}_{n\ge 1}\subset\D\) so that
\[
 A_p(P\circ\varphi,a_n)
 \longrightarrow
 B_p^p(P\circ\varphi)
 =
 B_p^p(P)
 =1.
\]
Write \(b_n=\varphi(a_n)\). By \eqref{eq:defect-formula},
\begin{equation}\label{eq:peak-test-defect}
 A_p(P\circ\varphi,a_n)
 =
 A_p(P,b_n)-\Delta_{p,P,\varphi}(a_n).
\end{equation}
Since
\[
 0\le A_p(P,b_n)\le1,
 \qquad
 \Delta_{p,P,\varphi}(a_n)\ge0,
\]
we obtain
\begin{equation}\label{eq:peak-test-two-limits}
 A_p(P,b_n)\longrightarrow1,
 \qquad
 \Delta_{p,P,\varphi}(a_n)\longrightarrow0.
\end{equation}
Since \(P\) is analytic on a neighborhood of \(\overline\D\), there
exists \(L>0\) such that
\[
A_p(P,a)
\le
L^p\mathcal A_p(|a|)
\longrightarrow0
\qquad (|a|\to1).
\]
Hence \(a\mapsto A_p(P,a)\) extends continuously to
\(\overline\D\) with boundary value \(0\). Since its unique maximum is
attained at \(w\), the convergence \(A_p(P,b_n)\to1\) forces
\(b_n\to w\).

Since \(A_p(P,w)=1\), the function \(P\) is nonconstant. Hence $
Z_w=\{P'=0\}\cup\{P=P(w)\}$ 
is discrete.  If \(K\Subset\D\setminus Z_w\), then
\(b_n\to w\) implies that
\[
 \abs{P(z)-P(b_n)}^{p-2}\abs{P'(z)}^2
 \longrightarrow
 \abs{P(z)-P(w)}^{p-2}\abs{P'(z)}^2
\]
uniformly on \(K\). The limiting function is continuous and strictly
positive there. This observation also covers \(0<p<2\), because
\(\abs{P(z)-P(b_n)}\) is bounded above and bounded away from zero on
\(K\) for all sufficiently large \(n\). Hence there exists \(c_K>0\)
such that
\[
 \abs{P(z)-P(b_n)}^{p-2}\abs{P'(z)}^2\ge c_K
 \qquad(z\in K)
\]
for all sufficiently large \(n\). By \eqref{eq:defect-integral},
\[
 \Delta_{p,P,\varphi}(a_n)
 \ge
 \frac{p^2c_K}{2\pi}
 \int_KH_\varphi(z,a_n)\dA(z).
\]
Together with \eqref{eq:peak-test-two-limits}, this gives
\[
 \int_KH_\varphi(z,a_n)\dA(z)\longrightarrow0,
\]
for every compact \(K\Subset\D\setminus Z_w\).
By a standard diagonal argument, after passing to a subsequence we
obtain
\[
H_\varphi(z,a_n)\longrightarrow0
\quad\text{for almost every }z\in\D\setminus Z_w.
\]
Since \(Z_w\) is discrete, it has area zero, and condition
\textup{(iv)} follows.
This proves \textup{(ii)}\(\Rightarrow\)\textup{(iv)}.

The equivalence
\textup{(iii)}\(\Leftrightarrow\)\textup{(iv)}
follows from \cite[Theorem~1.1]{ChenWulan}, applied with exponent
\(2\), together with Lemma~\ref{lem:exponent-transfer}.

It remains to prove \textup{(iv)}\(\Rightarrow\)\textup{(i)}, using
the argument recorded in \cite[Remark~2.1]{ChenWulan}.
Fix \(f\in\BMOA\) and \(w\in\D\). If \(f\) is constant, there is nothing to prove. We therefore assume
that \(f\) is nonconstant. Choose
\(\{a_n\}_{n\ge 1}\) as in \textup{(iv)} and set \(b_n:=\varphi(a_n)\). Then
\(b_n\to w\), and
\[
 0\le H_\varphi(z,a_n)\le G(z,b_n).
\]

We claim that \(\Delta_{p,f,\varphi}(a_n)\to0\). Set
\[
 U_n(z)
 :=
 \abs{f(z)-f(b_n)}^{p-2}\abs{f'(z)}^2G(z,b_n),
\]
and let \(U\) be the analogous function with \(b_n\) replaced by
\(w\). Then \(U_n\to U\) almost everywhere. By the Hardy--Stein
identity and continuity of \(b\mapsto A_p(f,b)\),
\[
 \int_{\D}U_n\dA
 =
 \frac{2\pi}{p^2}A_p(f,b_n)
 \longrightarrow
 \frac{2\pi}{p^2}A_p(f,w)
 =
 \int_{\D}U\dA.
\]
  Since \(U_n,U\geq0\), \(U_n\to U\) almost everywhere, and
\[
\int_{\D}U_n\,\dA\longrightarrow\int_{\D}U\,\dA,
\]
it follows that \(U_n\to U\) in \(L^1(\D)\). Consequently, \((U_n)\) is uniformly integrable. Now set
\[
 V_n(z)
 :=
 \abs{f(z)-f(b_n)}^{p-2}\abs{f'(z)}^2
 H_\varphi(z,a_n).
\]
Then \(0\le V_n\le U_n\) and \(V_n\to0\) almost everywhere. Vitali's
theorem yields \(\int_{\D}V_n\dA\to0\), which is precisely
\(\Delta_{p,f,\varphi}(a_n)\to0\).

The defect identity now gives
\[
 A_p(f\circ\varphi,a_n)
 =
 A_p(f,b_n)-\Delta_{p,f,\varphi}(a_n)
 \longrightarrow
 A_p(f,w).
\]
Hence \(B_p^p(f\circ\varphi)\ge A_p(f,w)\). Taking the supremum over
\(w\in\D\) gives \(B_p(f\circ\varphi)\ge B_p(f)\). Together with
\eqref{eq:subordination}, this proves equality. Since
\(\varphi(0)=0\), Proposition~\ref{prop:seminorm-reduction} shows that the full
quasi-norm is preserved. 
The proof is complete.
\end{proof}

  \section*{Acknowledgments}
The author is grateful to Professor Hasi Wulan for an inspiring talk on
composition operators on BMOA, which drew the author's attention to the
problem considered in this paper.

\bibliographystyle{amsplain}
\bibliography{references}

\medskip
 \noindent
School of Mathematical Sciences, Dalian University of Technology,
Dalian, Liaoning 116024, P. R. China

\noindent
Email address: \texttt{linzhaopeng2606@163.com} (Zhaopeng Lin)

\end{document}